\theoremstyle{definition}
\newtheorem{thm}{Theorem}[section]
\newtheorem{cor}[thm]{Corollary}
\newtheorem{prop}[thm]{Proposition}
\newtheorem{theo}[thm]{Theorem}
\newtheorem{lemm}[thm]{Lemma}
\newtheorem{defn}[thm]{Definition}
\newtheorem{rem}[thm]{Remark}
\numberwithin{equation}{section}
\newcommand{\spinc}{\mathrm{Spin}^c}
\begin{document}
\title[]{Action of the cork twist on Floer homology}%
\author{ Selman Akbulut, \c{C}a\u{g}ri Karakurt}%
\thanks{The first named author is partially supported by NSF grant DMS 9971440}
\thanks{ The second named author is  supported by a Simons fellowship.}
\address{Department of Mathematics Michigan State University \\
         East Lansing 48824 MI, USA }%
\email{ akbulut@math.msu.edu}
\address{Department of Mathematics
The University of Texas at Austin\\
2515 Speedway, RLM 8.100
Austin, TX 78712 }%
\email{ karakurt@math.utexas.edu}%

\subjclass{58D27,  58A05, 57R65}
\date{\today}
\begin{abstract} 
We utilize the Ozsv\'ath-Szab\'o contact invariant to detect the action of involutions on certain homology spheres that are surgeries on symmetric links, generalizing a previous result of Akbulut and Durusoy. Potentially this may be useful to detect different smooth structures on $4$-manifolds by cork twisting operation.
\end{abstract}

%\thanks{}
%\subjclass{}
%\keywords{}

%\date{}%
%\dedicatory{}%
%\commby{}%
% ----------------------------------------------------------------
%\begin{abstract}
%
%\end{abstract}
\maketitle
% ----------------------------------------------------------------

\section{Introduction}

Any two different smooth structures of a closed simply connected $4$-manifold are related to each other by a cork twisting operation \cite{M}, and the cork can be assumed to be a Stein manifold \cite{AM} (see \cite{AY} and \cite{AY1} for applications). A quick way to generate corks, which was used in \cite{AY2}, is from  symmetric links as follows: Let $L$ be a link in $S^3$ with two components $K_1\cup K_2$. Suppose that $L$ satisfies the following:

\vspace{0.2cm}

\begin{itemize}
\item[({\bf 1})]  Both components $K_1$ and $K_2$ are unknotted.
\item[({\bf 2})] There is an involution of $S^3$  exchanging $K_1$ and $K_2$.
\item[({\bf 3})]The linking number of $K_1$ and $K_2$ is $\pm 1$ (for some choice of orientations).

\end{itemize}

\vspace{0.23cm}

%\vspace{0.1cm}

\noindent From this we can construct a $4$--manifold $W(L)$ by carving out a disk bounded by $K_1$ from $4$--ball, and attaching a $2$--handle along $K_2$ with framing $0$. Therefore a handlebody diagram of $W(L)$ is given by a  planar projection of $L$ decorated by a dot on $K_1$ (cf. \cite{A}), and a $0$ on top of $K_2$. We also require that the $4$--manifold $W(L)$ admits an additional structure:

\vspace{0.2cm}

\begin{itemize}
\item[({\bf 4})]  The handlebody of $W(L)$ described above is induced by a Stein structure. 
\end{itemize}

\vspace{0.2cm}
   
\noindent The last condition can be reformulated as follows:

\vspace{0.2cm}

\begin{itemize}
\item[({\bf 4'})]  Regard $K_2 \subset S^1\times S^2=\partial S^1\times B^3$ equipped with the unique Stein fillable contact structure. Then the  maximal Thurston-Bennequin number of $K_2$ is at least $+1$.    
\end{itemize}
   
 \vspace{0.2cm}
  
\noindent We will call links satisfying conditions $(1)-(4)$ \emph{admissible}. Examples of admissible links are given in  Figure \ref{fig:exam1}. These examples are first studied in \cite{AY}. Note that the Hopf link is not admissible as it does not satisfy condition \textbf{(4)} even though  the corresponding $4$--manifold is $B^4$ which admits a Stein structure.
Condition \textbf{(3)} ensures that $W(L)$ a contractible $4$--manifold. Hence its boundary is a homology sphere. By condition \textbf{(2)}, we have an involution $\tau$ on $\partial W(L)$ that is obtained by exchanging the components of $L$. The significance this involution is indicated by the following theorem:

\begin{figure}[h]
	\includegraphics[width=.40\textwidth]{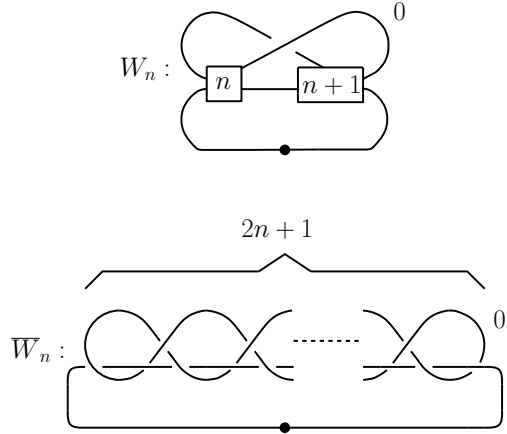}
	\caption{Examples of corks associated to certain symmetric links}
	\label{fig:exam1}
\end{figure}

\begin{theo}\label{main}
Let $L$ be an admissible link. The involution $\tau:\partial W(L)\to \partial W(L)$ acts non trivially on the Heegaard Floer homology group $HF^+(-\partial W(L))$
\end{theo}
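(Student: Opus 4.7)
The plan is to detect the action of $\tau_*$ on $HF^+(-Y)$, where $Y:=\partial W(L)$, by comparing the Ozsv\'ath--Szab\'o contact classes of two Stein-fillable contact structures on $Y$ that are related by $\tau$. The Stein structure $J$ on $W(L)$ guaranteed by \textbf{(4)} induces a contact structure $\xi$ on $Y$. Swapping the roles of $K_1$ and $K_2$ in the handle diagram---dotting $K_2$ and attaching a $0$-framed $2$-handle to $K_1$---and using the symmetry \textbf{(2)}, produces a second handle description of the same smooth $4$-manifold $W(L)$; by the symmetric analog of \textbf{(4')} this description is again Stein, inducing a second contact structure $\xi'$ on $Y$. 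The diffeomorphism identifying the two Stein fillings restricts to $\tau$ on the boundary, so $\tau^*\xi\simeq\xi'$, and naturality of the contact invariant yields
\[
\tau^*\, c(\xi) \;=\; c(\xi') \;\in\; HF^+(-Y).
\]
The theorem therefore reduces to the assertion that $c(\xi)\neq c(\xi')$.

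Both classes are nonzero by the Ozsv\'ath--Szab\'o nonvanishing theorem for Stein-fillable contact structures. Since $W(L)$ is contractible with $\chi(W(L))=1$ and $\sigma(W(L))=0$, and $c_1(J)=c_1(J')=0$, the $d_3$-invariants of $\xi$ and $\xi'$ coincide; as $Y$ is a homology sphere there is a unique $\spinc$-structure on $Y$. Consequently $c(\xi)$ and $c(\xi')$ live in the same $\spinc$-summand and at the same absolute grading of $HF^+(-Y)$, so any separation of them must come from the fine structure of the Floer chain complex rather than from grading.

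The chain-level comparison is the crux. I would construct an open book decomposition of $Y$ adapted to each Stein structure, reading off the page and monodromy from the Legendrian realization of $K_2$ (respectively $K_1$) used in the corresponding Stein filling. By the link symmetry, the two open books share a common page, and their monodromies differ by the $\tau$-swap of two distinguished arcs. The Honda--Kazez--Mati\'c formula then presents $c(\xi)$ and $c(\xi')$ as specific generators in a Heegaard diagram subordinate to this common page. The principal obstacle is verifying distinctness of these generators in $\widehat{HF}(-Y)$, and hence in $HF^+(-Y)$: since grading arguments are unavailable, this must be done either by direct analysis of the Floer differentials on an explicit Heegaard diagram, or, more conceptually, by producing an auxiliary Stein cobordism whose induced map on $HF^+$ annihilates one of $c(\xi),\, c(\xi')$ but not the other. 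For the admissible families depicted in Figure~\ref{fig:exam1} the relevant Heegaard diagrams are small enough that this computation is explicit, extending the Akbulut--Durusoy calculation from the simplest cork to the whole admissible class.
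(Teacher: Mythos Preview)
Your setup is correct and matches the paper: reduce to showing $c^+(\xi)\neq c^+(\xi')$ in $HF^+(-Y)$, where $\xi'=\tau^*\xi$. You also correctly observe that grading alone cannot separate them, and you even name the right tool---``an auxiliary Stein cobordism whose induced map on $HF^+$ annihilates one of $c(\xi),\,c(\xi')$ but not the other''. The gap is that you do not construct this cobordism; instead you retreat to a direct Heegaard--diagram computation which you assert is feasible for the families in Figure~\ref{fig:exam1}. That is not a proof of Theorem~\ref{main}: admissible links are not exhausted by those families, and ``small enough to compute'' is a promise, not an argument. So as written the proposal stops exactly at the point where the real content begins.

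The paper executes precisely the cobordism idea you gesture at. One attaches a $2$--handle to $W$ along a small trefoil with framing~$+1$, linked once with the $1$--handle. Because of that linking, this is a Stein cobordism $M$ built on $(Y,\xi)$, and by Lemma~\ref{concavehitit} it extends to a concave filling $V$ with $F^{\mathrm{mix}}_{V,\mathfrak{s}}(\Theta^-_{(-2)})=\pm c^+(\xi)$. Since $X=W\cup V$ is a relatively minimal Lefschetz fibration, Theorem~\ref{OSadjunc} and the composition law give $F^+_{W,\mathfrak{s}}(c^+(\xi))=\pm\Theta^+_{(0)}\neq 0$. After the cork twist, however, the trefoil now sits in $S^3$ (the dot has moved to the other component), so $W\cup_\tau M$ contains a torus of self--intersection~$+1$; the adjunction inequality (Theorem~\ref{adjuntion}) then kills every basic class of $X'=W\cup_\tau V$, forcing $F^+_{W,\mathfrak{s}}(\tau^*c^+(\xi))=0$. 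This single uniform trick works for \emph{every} admissible link, with no Floer computation at all, and as a bonus shows that both $c^+(\xi)$ and $c^+(\xi')$ survive to $HF^+_{\mathrm{red}}$.
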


\noindent This result generalizes \cite{AD}, and it implies a result from \cite{AY2}; namely $\tau$ does not extend to $W(L)$ as a diffeomorphism, even  though it extends as a homeomorphism. Therefore $W(L)$ is a cork in the sense of \cite{AY}. The involution $\tau$ is called the \emph{cork twist}. Theorem \ref{main} was proved for the Mazur manifold $W_1$ in \cite{A1}, and  \cite{S}(for instanton Floer homology), and  \cite{AD} (for Heegaard Floer homology). Unlike the arguments in these papers, we do not explicitly find the Floer homology of $\partial W(L)$, and calculate the homomorphism the cork twist induces. Instead, we show that the homomorphism permutes two different distinguished Floer homology classes. These classes $c^{+}(\xi)$ are naturally associated to a cork via the induced contact structure $\xi$ on the boundary.  This suggest that a cork should be considered with its contact homology class $(W, c^{+}(\xi))$, to be used as a tool for checking nontriviality of its involution.   In the course of our proof,  we will incorporate several techniques developed in \cite{AY},   \cite{AO} and   \cite{P}.

\vspace{.05in}

Organization of this paper is as follows. We will review some standard facts about Stein manifolds, and Heegaard Floer homology respectively in the next two sections. We shall deduce Theorem \ref{main}  from a slightly stronger result (Theorem \ref{distinct}). This result  along with some easy consequences  are discussed in section \ref{sec:main}.

\section{stein manifolds and their symplectic compactifications }

Our aim in this section is to review the proof of the embedding theorem of Stein manifolds into closed symplectic manifolds in dimension four as given in \cite{AO}. See \cite{LM} for an alternative proof.   We assume that the reader is familiar with the basics of contact geometry, open book decompositions and Lefschetz fibrations (cf. \cite{OzS}). We start by recalling the topological characterization of Stein manifolds.

\vspace{.1in} 

\begin{thm}\label{eli}( \cite{E})
Let $W=B^4\cup(1\text{-handles})\cup (2\text{-handles})$ be a  $4$-dimensional handlebody with one $0$-handle and no $3$ or $4$-handles. Then:
\begin{enumerate}
\item The standard Stein structure on $B^4$ can be extended over the $1$-handles so that the manifold $W_1:=B^4\cup (1\text{-handles})$ becomes Stein.
\item If each $2$-handle is attached to $\partial W_1$ along a Legendrian knot with framing one less than the Thurston-Bennequin framing, then the complex structure on $W_1$ can be extended over the $2$-handles  making $W$ a Stein manifold.
\item The handle decomposition of $W$ is induced by a strictly plurisubharmonic  Morse function.
\end{enumerate} 
\end{thm}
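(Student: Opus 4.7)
The plan is to establish each of the three items by building explicit local Stein models and gluing them, following Eliashberg's original argument. The guiding principle is that a Stein structure on a handlebody is exactly the data of a complex structure $J$ together with a strictly plurisubharmonic Morse function $\phi$ whose critical points correspond to the handles; hence the proof reduces to constructing $\phi$ locally on each handle in a way that is compatible, on the collaring region, with the induced contact structure $\xi=TM\cap J(TM)$ on the boundary.

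For part (1), I would start with the standard Stein ball $(B^{4},\phi_{0})$, where $\phi_{0}(z_{1},z_{2})=|z_{1}|^{2}+|z_{2}|^{2}$. To extend across a $1$-handle, construct a model Stein $1$-handle as a tubular neighborhood of a properly embedded totally real arc in $\C^{2}$ (or, equivalently, a thickening of an index-$1$ critical point of an explicit strictly plurisubharmonic Morse function). A direct computation shows that the contact structure induced on the belt sphere of this model is the unique tight contact structure on $S^{1}\times S^{2}$. Gluing this model to $(B^{4},\phi_{0})$ along the attaching $S^{0}\times B^{3}$ can be done after an isotopy of the attaching balls, because any two Darboux balls in a contact $3$-manifold are contact-isotopic. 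Taking a smooth convex combination of $\phi_{0}$ and the local model near the collar (strict plurisubharmonicity is preserved under convex combination) gives the required global function on $W_{1}$.

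For part (2), the essential input is Weinstein's handle attachment. Given a Legendrian knot $K\subset(\partial W_{1},\xi)$, the contact neighborhood theorem provides a standard model for a neighborhood of $K$ inside which one can explicitly write down a strictly plurisubharmonic function on $D^{2}\times D^{2}\subset\C^{2}$ having an index-$2$ critical point whose descending disk attaches exactly along $K$. A direct calculation of the framing induced by the complex structure on the outgoing normal disk shows that the handle is attached with framing $\mathrm{tb}(K)-1$; this is where the hypothesis of the theorem is used. Gluing this Weinstein $2$-handle onto $W_{1}$ preserves strict plurisubharmonicity, and concatenating with the function from (1) produces the plurisubharmonic Morse function claimed in (3), whose critical points are exactly the cores of the $0$-, $1$- and $2$-handles.

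The main obstacle is the compatibility at the collaring regions: one has to verify that, after isotoping the attaching data to coincide with the model, the strictly plurisubharmonic functions can be interpolated smoothly without losing strict plurisubharmonicity or creating spurious critical points. For the $1$-handles this is relatively mild (any two Darboux balls are contact-isotopic); the genuinely delicate step is the $2$-handle gluing, where one must simultaneously realize the Legendrian knot as the attaching sphere of the Weinstein model and match the contact framing, so that the interpolated function still has $K$ as its descending sphere with the prescribed framing $\mathrm{tb}(K)-1$. This interpolation argument, together with the explicit form of the model $\phi$ near an index-$2$ critical point, constitutes the technical heart of Eliashberg's theorem.
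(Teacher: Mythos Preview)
The paper does not prove this theorem at all; it is quoted verbatim from Eliashberg \cite{E} (with Gompf \cite{Go} mentioned for the handlebody pictures) and used as a black box. So there is no ``paper's own proof'' to compare against---your proposal is effectively a sketch of Eliashberg's original argument, supplemented by Weinstein's symplectic handle construction, and as such it is the standard route.

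As a sketch it is essentially sound, but one terminological slip is worth flagging: in part~(1) you write that ``the contact structure induced on the belt sphere of this model is the unique tight contact structure on $S^{1}\times S^{2}$.'' The belt sphere of a $4$-dimensional $1$-handle is an $S^{2}$, not $S^{1}\times S^{2}$; what you mean is that the new convex boundary of $B^{4}\cup(1\text{-handle})$ is $S^{1}\times S^{2}$ carrying its unique tight (Stein fillable) contact structure. Also, the assertion that ``strict plurisubharmonicity is preserved under convex combination'' needs the two functions to be strictly plurisubharmonic for the \emph{same} complex structure, and in practice the interpolation is done on the symplectic/Weinstein side (Liouville vector fields, not just $\phi$) rather than by naive convex combination of $\phi$'s; this is exactly the ``technical heart'' you allude to at the end, and it is handled carefully in \cite{E} and in Cieliebak--Eliashberg's later exposition.
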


By Theorem \ref{eli} (see also \cite{Go}), we represent Stein manifolds  by special kind of handlebody diagrams which contain handles of index up to two and the attaching circles of the two handles are all Legendrian (i.e. they have horizontal cusps instead of  vertical tangencies and the smaller slope strand is over the bigger slope strand at each crossing). For a Legendrian knot, the Thurston-Bennequin number ($tb$ for short) is defined to be the writhe minus the half of the number of cusps. In these special diagrams we understand that the framing on each $2$--handle is one less than $tb$ as in item $(2)$ in Theorem \ref{eli}. By abuse of language, a handle decomposition as in Theorem \ref{eli} is called a Stein structure.
 
\vspace{.1in}
Similarly for a given contact manifold $(Y,\xi)$, one can attach $1$--handles and $tb-1$ framed $2$--handles to $Y\times \{1 \} \subset Y\times [0,1]$, in order to form a \emph{Stein cobordism} built on $(Y,\xi)$.
\vspace{.1in}

The embedding theorem relies on the fact that Stein manifolds are equivalent to \emph{positive allowable Lefschetz fibrations} ($PALF$ for short). Recall that a $PALF$ on a $4$--manifold $W$ is a Lefschtetz fibration over a disk whose fibers have non-empty boundary and vanishing cycles are all non-separating curves. The restriction of a $PALF$ on the boundary $Y=\partial W$ is an open book decomposition whose monodromy can be written as a product of right handed Dehn twists. By \cite{AO1}, every Stein $4$--manifold admits a $PALF$ (and every $PALF$ has a Stein structure). The construction is algorithmic where input is a handlebody decomposition of a Stein $4$--manifold $W$ as described in Theorem \ref{eli} and the output is a $PALF$ of $W$ which is unique up to positive stabilization. Moreover, it is proved in \cite{P} that the open book induced by this $PALF$ is compatible (in the sense of \cite{G}) with the contact structure $\xi$ induced by the Stein structure.
 
 \vspace{.1in}
 
Given a Stein manifold $W$, fix a compatible $PALF$ as in the previous paragraph. Then one can extend this $PALF$ to a Lefschetz fibration over a closed manifold.   Here is a  sketch of what to do: First, recall the chain relation in the mapping class group of a surface. Let $\Sigma_{g}$ be a surface of genus $g$. Let  $\beta_1,\beta_2,\cdots,\beta_{2g}$ be a set of non-separating simple closed curves such that the following hold.
\begin{itemize}
	\item $|\beta_i \cap \beta_j|=1$ if $|i-j|=1$.
	\item $|\beta_i \cap \beta_j|=0$ if $|i-j|>1$.
\end{itemize}

\vspace{.05in}

\begin{prop}\label{chainrel}Let $t_{\alpha}$ denote the mapping class of the right handed Dehn twist about a curve $\alpha$. Then following relation holds in the mapping class group of $\Sigma_{g}$.

\begin{equation}\label{eq:chain}
\displaystyle (t_{\beta_1}t_{\beta_2}\cdots t_{\beta_{2g}})^{4g+2}=1
\end{equation}
\end{prop}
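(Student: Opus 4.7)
The plan is to deduce the identity on the closed surface $\Sigma_g$ from the classical chain relation on a subsurface with boundary. I expect the main obstacle to be the chain relation itself; the passage to $\Sigma_g$ is essentially a disk-bounding observation.

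First I would consider a regular neighborhood $N$ of $\bigcup_{i=1}^{2g}\beta_{i}$. Because consecutive curves intersect transversely in a single point and non-consecutive ones are disjoint, the $1$-complex $\bigcup_i \beta_i$ is a graph with $2g-1$ vertices and $4g-2$ edges, so $\chi(N)=1-2g$; a direct inspection of the chain structure shows that $N$ has a single boundary component $d$, forcing $N\cong\Sigma_{g,1}$. Since $N$ and $\Sigma_g$ have the same genus, the complement $\Sigma_g\setminus\mathrm{int}(N)$ is a $2$-disk; hence $d$ bounds a disk in $\Sigma_g$, and therefore $t_d=1$ in $\mathrm{MCG}(\Sigma_g)$.

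Next I would invoke the classical chain relation in $\mathrm{MCG}(\Sigma_{g,1})$:
\begin{equation*}
(t_{c_{1}}t_{c_{2}}\cdots t_{c_{2g}})^{4g+2}=t_{d},
\end{equation*}
valid whenever $c_{1},\ldots,c_{2g}$ is a chain of simple closed curves whose regular neighborhood fills $\Sigma_{g,1}$ with boundary $d$. The cleanest proof realizes $\Sigma_{g,1}$ as the double cover of $D^{2}$ branched over $2g+1$ interior points, under which each $t_{c_i}$ is the lift of the half-twist $\sigma_i$ exchanging adjacent branch points and the hyperelliptic involution is the deck transformation; the identity then lifts the central relation $(\sigma_{1}\sigma_{2}\cdots\sigma_{2g})^{2g+1}=\Delta^{2}$ in the braid group $B_{2g+1}$, with the factor of $2$ in the exponent $4g+2$ accounting for the degree of the covering along the boundary and the identification of the lift of $\Delta^{2}$ with the boundary-parallel Dehn twist. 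Alternatively, one can prove the relation by the Alexander method applied to a carefully chosen arc system in $\Sigma_{g,1}$, although this is substantially more tedious.

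Combining the two steps yields $(t_{\beta_{1}}t_{\beta_{2}}\cdots t_{\beta_{2g}})^{4g+2}=t_{d}=1$ in $\mathrm{MCG}(\Sigma_g)$, as required. The entire content of the argument is concentrated in the chain relation above; once granted, the descent from $\Sigma_{g,1}$ to the closed surface $\Sigma_g$ is automatic because the boundary of the filling subsurface is null-homotopic.
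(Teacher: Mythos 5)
The paper offers no proof of this proposition: it is stated as a recall of the classical chain relation and then used as a black box to trivialize the monodromy of the open book (the reference is standard; see for instance Farb--Margalit, \emph{A Primer on Mapping Class Groups}, Proposition~4.12). Your argument supplies a correct proof and follows the standard route. The two steps are sound: the Euler characteristic count $\chi(N)=1-2g$, the fact that a chain of even length $2g$ has a regular neighborhood with a single boundary component (so $N\cong\Sigma_{g,1}$), and the additivity $\chi(\Sigma_g)=\chi(N)+\chi(\Sigma_g\setminus\operatorname{int}N)$ forcing the complement to be a disk, which kills $t_d$ in $\operatorname{MCG}(\Sigma_g)$; then the chain relation $(t_{c_1}\cdots t_{c_{2g}})^{4g+2}=t_d$ on $\Sigma_{g,1}$, obtained by lifting the braid identity $(\sigma_1\cdots\sigma_{2g})^{2g+1}=\Delta^2$ through the hyperelliptic branched double cover of the disk. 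One small imprecision worth noting: the phrase ``the factor of $2$ accounting for the degree of the covering along the boundary'' glosses over the point that the lift of $\Delta^2$ is not literally the Dehn twist $t_d$ but a half-twist neighborhood map whose square is $t_d$; making this rigorous requires either working with the mapping class group of the disk with marked points rel boundary and tracking the boundary framing carefully, or appealing to the hyperelliptic involution being $(t_{c_1}\cdots t_{c_{2g}})^{2g+1}$ up to the boundary twist. Since the result is classical and the paper itself takes it as given, your sketch is more than adequate and genuinely complements the paper by indicating where the exponent $4g+2$ comes from.
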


Now on the Stein manifold $W$, we first choose a $PALF$ which induces an open book on $Y=\partial W$ with connected binding. This open book is compatible with the induced contact structure $\xi$. Then we attach a $2$--handle along the binding with $0$--framing relative to the page framing. Denote the corresponding cobordism by $V_0:Y\to Y_0$. By  \cite{El2}, $V_0$ can be equipped with a symplectic structure extending the one defined on a collar neighborhood of $Y$ . On the other hand, $Y_0$ is a surface bundle over circle whose monodromy can be written as a product of right handed Dehn twists along non-separating curves. Let $F$ denote a generic fiber in $Y_0$. Next, we use the chain relation of Proposition \ref{chainrel} to trivialize the monodromy by attaching $-1$ framed $2$--handles: Write the monodromy of $Y_0$ as a product of right handed Dehn twists $t_{\gamma _1}\cdots t_{\gamma _n}$, where each $\gamma_i$ is a non-separating curve on $F$. There is a diffeomorphism of $F$ identifying $\gamma_i$ with $\beta_1$ for each $i=1,\cdots,n$. Using this diffeomorphism and the chain relation, we can write $t_{\gamma_i}^{-1}$ as a product of right handed Dehn twists.  Attaching $2$--handles as necessary, we can trivialize the monodromy.   Finally we attach a copy $F\times D^2$ to get a cobordism $V_1:Y_0\to \emptyset $. Note that $V_1$ itself admits a Lefschetz fibration (with closed fibers) over disk.  The closed $4$--manifold $X:=W\cup V_0 \cup V_1$ naturaly admits a Lefshcetz fibration over $S^2$. It is also possible to show that the Lefschetz fibration has a section, and hence symplectic, and that the construction can be made to guarantee that $b_2^+(X)\geq 2$. In other words, $V=V_0\cup V_1$ is a \emph{concave symplectic filling} for $(Y,\xi)$. We summarize this construction in the following statement.

\begin{theo}(\label{concavefill}\cite{AO1})
Every Stein fillable contact manifold $(Y,\xi)$ admits a concave symplectic filling $V=V_0\cup V_1$, where $V_0$ is the cobordism $Y\to Y_0$ corresponding to a $2$--handle attachment along the binding of an open book compatible with $\xi$, and $V_1$ admits a Lefschetz fibration over disk with closed fibers, which extends the  fibration on $Y_0$. Moreover, $V$ can be chosen in such a way that $b_2^+(V)\geq 2$.
\end{theo}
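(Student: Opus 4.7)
The plan is to construct the two pieces $V_0$ and $V_1$ explicitly as outlined in the discussion preceding the theorem, verify that the symplectic structures fit together on their overlap, and then arrange the construction to achieve $b_2^+(V) \geq 2$. The main tools are Eliashberg's symplectic capping for $2$-handle attachments along bindings, the equivalence between PALFs and Stein structures, and the chain relation in the mapping class group recorded in Proposition \ref{chainrel}.

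First, fix a compatible PALF on $W$ given by \cite{AO1}; by \cite{P} it induces an open book decomposition of $Y = \partial W$ adapted to $\xi$. I would first arrange for this open book to have connected binding, which can always be achieved by positive stabilization (stabilizations preserve compatibility with $\xi$ and preserve the PALF on $W$). Attach a $2$-handle to $Y \times \{1\} \subset Y \times [0,1]$ along the binding with framing $0$ relative to the page framing. Call the resulting cobordism $V_0 : Y \to Y_0$. The page caps off inside $Y_0$ to a closed fiber $F$, so $Y_0$ is a surface bundle over $S^1$ whose monodromy $\phi$ is the original open book monodromy extended trivially over the capping disks. By Eliashberg \cite{El2}, $V_0$ admits a symplectic structure extending the one on a collar of $Y$, completing the first stage.

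Next, write $\phi = t_{\gamma_1}\cdots t_{\gamma_n}$ as a product of right-handed Dehn twists along nonseparating curves $\gamma_i$ in $F$. The chain relation (Proposition \ref{chainrel}) expresses $t_{\beta_1}^{-1}$ as a product of positive Dehn twists, and since each $\gamma_i$ is carried to $\beta_1$ by some diffeomorphism of $F$, the same holds for each $t_{\gamma_i}^{-1}$. Attaching $-1$-framed (relative to the fiber framing) $2$-handles along suitable curves on fibers of the fibration on $Y_0$ introduces positive Lefschetz singularities, extending both the fibration and the symplectic form via Gompf's Lefschetz construction. After finitely many such handle attachments the monodromy becomes trivial, leaving $F \times S^1$ on the outer boundary, which I cap off by $F \times D^2$. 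The resulting cobordism $V_1 : Y_0 \to \emptyset$ admits a Lefschetz fibration over the disk with closed fiber $F$ extending the fibration on $Y_0$, and $X := W \cup V_0 \cup V_1$ admits a Lefschetz fibration over $S^2$.

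The main obstacle is verifying symplecticity on all of $X$ and arranging $b_2^+(V) \geq 2$. For symplecticity I would invoke Gompf's theorem: a Lefschetz fibration is symplectic provided the fiber class is non-trivial, which can be secured by ensuring the fibration admits a section. A section for $V_1$ can be constructed from a marked point disjoint from the vanishing cycles used in the chain-relation substitutions (one tracks the section across each handle attachment using the standard local model), and this section extends trivially over $V_0$ and $W$ since the fiber in $V_0$ is already closed. For the condition $b_2^+(V) \geq 2$, I would repeatedly apply the chain relation: each application of the relation introduces $4g+2$ additional Lefschetz singularities and correspondingly additional classes in $H_2(V)$ with positive self-intersection contribution (after adjusting framings), so iterating boosts $b_2^+$. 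I expect the symplectic/section verification to be the most delicate step, since it requires coherent tracking of three different structures (Stein, Eliashberg cap, Lefschetz cap) across the interfaces $Y$ and $Y_0$; the $b_2^+$ lower bound is then a matter of stabilizing the construction sufficiently.
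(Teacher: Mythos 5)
Your proposal follows the same route as the paper's sketch (which is all the paper gives, citing \cite{AO1} for the details): cap off the binding of a compatible open book with a page-framing-$0$ two-handle to obtain $V_0$, use Eliashberg's result \cite{El2} for the symplectic structure, trivialize the resulting surface-bundle monodromy on $Y_0$ via the chain relation of Proposition \ref{chainrel} by attaching $(-1)$-framed two-handles, and finally cap with $F\times D^2$ to produce a Lefschetz-fibered $V_1$. So the construction and decomposition $V=V_0\cup V_1$ match the paper exactly.

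Two of the details you attempt to supply --- precisely the ones the paper waves at --- are not right as stated. First, your mechanism for $b_2^+(V)\geq 2$ is incorrect: the extra Lefschetz critical points introduced by the chain relation do \emph{not} contribute ``classes in $H_2(V)$ with positive self-intersection contribution (after adjusting framings).'' A positive Lefschetz critical point is a two-handle attached with framing \emph{forced} to be $-1$ relative to the fiber surface framing; there is no framing to adjust, and the corresponding thimbles/vanishing spheres land in the negative part of the intersection form. The positivity in $H_2(V)$ comes from the fiber class (self-intersection $0$) together with a section, which span a hyperbolic summand giving only $b_2^+\geq 1$; getting a second positive class genuinely requires a different observation than the one you cite. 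Second, your sentence that the section ``extends trivially over $V_0$ and $W$ since the fiber in $V_0$ is already closed'' is confused: the section is the trace of the cap-off point, its piece in $V_0$ is the cocore of the capping two-handle, and it lies entirely in $V=V_0\cup V_1$; it does not and should not extend into $W$ (the fibers over the $W$-part of the base have boundary), and it is the capping by $F\times D^2$ in $V_1$ that closes it up to a sphere.
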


%\vspace{.02in}

\section{heegaard floer homology}

Heegaard Floer homology  (\cite{OS4}, \cite{OS5}) is a type of Lagrangian Floer homology for the symmetric product of a Heegaard surface of a $3$--manifold. There are several versions denoted by $\widehat{HF}(Y,\mathfrak{t})$, $HF^+ (Y,\mathfrak{t}), HF^- (Y,\mathfrak{t}),HF^\infty (Y,\mathfrak{t})$. All of these groups are invariants of a  $3$-manifold $Y$ with a $\mathrm{Spin}^c$ structure $\mathfrak{t}$. When $c_1(\mathfrak{t})$ is torsion these groups are $\mathbb{Q}$-graded. Each one admits an endomorphism $U$ of degree $-2$ which makes all of them $\mathbb{Z}[U]$ modules (the action of $U$ on $\widehat{HF}$ is trivial). They also satisfy the property that any $\mathrm{Spin}^c$ cobordism 
 $(M,\mathfrak{s}):(Y_1,\mathfrak{t}_1)\to (Y_2,\mathfrak{t}_2)$, 
 from $(Y_1,\mathfrak{t}_1)$ to  $(Y_2,\mathfrak{t}_2)$, induces a homomorphism 
\begin{equation}\label{cobhom}
F^\circ _{(M,\mathfrak{s})}:HF^\circ(Y_1,\mathfrak{t}_1)\to HF^\circ(Y_2,\mathfrak{t}_2)
\end{equation}
where $HF^\circ$ represents any of $\widehat{HF}, HF^+, HF^-$,or $HF^\infty$. When both $c_1(\mathfrak{t}_1)$ and $c_1(\mathfrak{t}_2)$ are torsion, these homomorphisms shift degree by

\begin{equation}\label{eqn:degree}
d(M,\mathfrak{s})=\frac{c_1(\mathfrak{s})^2-3\sigma (M)-2\chi (M)}{4}
\end{equation}

These homomorphisms also satisfy the following \emph{composition law}: If $(M_1,\mathfrak{s}_1):(Y_1,\mathfrak{t}_1)\to (Y_2,\mathfrak{t}_2)$ and $(M_2,\mathfrak{s}_2):(Y_2,\mathfrak{t}_2)\to (Y_3,\mathfrak{t}_3)$ are two $\spinc$ cobordisms, and $F^\circ_{M_1,\mathfrak{s}_1}$ and $F^\circ_{M_2,\mathfrak{s}_2}$ are the induced homomorphism their composition is given by

\begin{equation}\label{eqn:composition}
F^\circ_{M_2,\mathfrak{s}_2}\circ F^\circ_{M_1,\mathfrak{s}_1}=\sum_{\mathfrak{s}\in \spinc (M_1\cup W_2):\mathfrak{s}|_{M_i}=\mathfrak{s}_i}F^\circ _{M_1\cup M_2,\mathfrak{s}}
\end{equation}

The following long exact sequence exists for every $\mathrm{Spin}^c$ $3$--manifold $(Y,\mathfrak{t})$ and it is natural under cobordism--induced homomorphisms.

\begin{equation}\label{longexct}
\begin{CD}
\cdots @>\delta>> HF^-(Y,\mathfrak{t})@>\iota>> HF^\infty(Y,\mathfrak{t})  @>\pi>>HF^+(Y,\mathfrak{t})@>\delta>>\cdots
\end{CD}
\end{equation}

\noindent The connecting homomorphism  $\delta$ is  an isomorphim between $\mathrm{coker}(\pi)$ and $\ker{\iota}$. We denote these groups respectively by $HF^+_\mathrm{red}(Y,\mathfrak{t})$ and $HF^-_\mathrm{red}(Y,\mathfrak{t})$, and call both of them by the same name: the \emph{reduced} Heegaard Floer homology. 

Heegaard Floer homology also provides a $4$-manifold invariant, \cite{OS6}. To review the definition of this invariant first recall the mixed homomorphisms. Let $(M,\mathfrak{s}):(Y_1,\mathfrak{t}_1)\to (Y_2,\mathfrak{t}_2)$ be a $\mathrm{Spin}^c$ cobordism with $b_2^+(M)\geq 2$. Every such cobordism admits an admissible cut; That is to say  $M$ can be decomposed as a union of two codimension zero sub-manifolds $M_1$ and $M_2$ with  $b_2^+(M_i)\geq 1$, $i=1,2$, and $\delta H^1(N)\subseteq H^2(M)$ is trivial where $N$ is the common boundary of these two sub-manifolds. Let $\mathfrak{s}_i=\mathfrak{s}|_{M_i}$ and $\mathfrak{t}=\mathfrak{s}|_{N}$. Then we have the following commutative diagram. 

$$
\begin{CD}
HF^+(Y_1,\mathfrak{t_1})@>\delta>> HF^-(Y_1,\mathfrak{t_1})  @>\iota>>HF^\infty(Y_1,\mathfrak{t_1})\\
@VV F^+_{M_1,\mathfrak{s}_1} V @VV F^-_{M_1,\mathfrak{s}_1} V @VV F^\infty_{M_1,\mathfrak{s}_1}=0 V\\
HF^+(N,\mathfrak{t})@>\delta>> HF^-(N,\mathfrak{t})  @>\iota>>HF^\infty(N,\mathfrak{t})\\
@VV F^+_{M_2,\mathfrak{s}_2} V @VV F^-_{M_2,\mathfrak{s}_2} V @VV F^\infty_{M_2,\mathfrak{s}_2}=0 V\\
HF^+(Y_2,\mathfrak{t_1})@>\delta>> HF^-(Y_2,\mathfrak{t_1})  @>\iota>>HF^\infty(Y_2,\mathfrak{t_1})
\end{CD}
$$

\vspace{.05in}

The mixed homomorphism:  
$$F^{\mathrm{mix}}_{(M,\mathfrak{s})}: HF^-(Y_1,\mathfrak{t}_1)\to HF^+(Y_2,\mathfrak{t}_2)$$

\noindent is defined by the composition $F^+_{(M,\mathfrak{s}_2)}\circ \delta ^{-1} \circ F^-_{(M,\mathfrak{s}_1)}$. In \cite{OS6}, It is proved that the mixed homomorphism is independent of the admissible cut.

\vspace{.05in}
To define the $4$--manifold invariant, we need to recall  the Heegaard-Floer homology groups of  the $3$--sphere $S^3$ with its unique $\mathrm{Spin}^c$ structure:
\begin{displaymath}
   HF^+_n(S^3) = \left\{
     \begin{array}{ll}
       \mathbb{Z} & \mathrm{If} \; \mathrm{n\, is\, even\, and} \; n\geq{0}\\
       0 & \mathrm{If} \; \mathrm{n \, is\, odd,} 
     \end{array}
   \right.
\end{displaymath} 

\begin{displaymath}
   HF^-_n(S^3) = \left\{
     \begin{array}{ll}
       \mathbb{Z} & \mathrm{If} \; \mathrm{n\, is\, even\, and} \; n\leq{-2}\\
       0 & \mathrm{If} \; \mathrm{n \, is\, odd,} 
     \end{array}
   \right.
\end{displaymath} 

Now, let $X$ be a closed $4$--manifold with $b_2^+(X)\geq 2$ and $\mathfrak{s}$ be a $\mathrm{Spin}^c$ structure on $X$. For simplicity, assume that $b_1(X)=0$. Puncture $X$ at two points and regard it as a cobordism from the $3$--sphere to itself. Let $\Theta ^\pm_{(n)}$  denote the generator of $HF_n ^\pm(S^3)$. The \emph{Ozsv\'ath-Szab\'o} $4$--manifold invariant is a linear map $\Phi_{X,\mathfrak{s}}:\mathbb{Z}[U]\to\mathbb{Z}$ which is described as follows:  $\Phi_{X,\mathfrak{s}}(U^n)$   can be characterized uniquely by the  formula. 
$$F^{\mathrm{mix}}_{X,\mathfrak{s}}(U^n\Theta^-_{(-2)})=(\Phi_{X,\mathfrak{s}}(U^n))\Theta^+_{(0)}$$

The $4$- manifold invariant is zero on elements of degree not equal to $d(X,\mathfrak{s})$. A $\mathrm{Spin}^c$ structure $\mathfrak{s}$ on $X$ is called a \emph{basic class} if $\Phi_{X,\mathfrak{s}}\not \equiv 0$. Finding the set of all basic classes of a given $4$--manifold is an important problem in low-dimensional topology. The \emph{adjunction inequality} gives a very strong restriction on the set of basic classes.

\begin{theo}\label{adjuntion}(\cite{OS6})
Let $X$ be a closed $4$--manifold. Let $\Sigma \subset X$ be a homologically non-trivial embedded surface with genus $g\geq 1$ and with non-negative self-intersection number. Then for each $\mathrm{spin}^c$ structure $\mathfrak{s}\in\mathrm{Spin}^c(X)$ for which $\Phi_{X,\mathfrak{s}} \neq 0$, we have that
\begin{equation}
\left | \langle c_1(\mathfrak{s}),[\Sigma]\rangle \right |+[\Sigma]\cdot[\Sigma] \leq 2g-2
\end{equation}
\end{theo}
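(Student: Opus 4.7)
The plan is to reduce to the case $[\Sigma]\cdot[\Sigma]=0$ via the blow-up formula, and then derive the bound from vanishing of the Heegaard Floer homology of $S^1\times\Sigma_g$ in Spin$^c$ structures that evaluate largely on the fiber. Suppose first that $n:=[\Sigma]\cdot[\Sigma]>0$. I would blow up $X$ at $n$ generic points on $\Sigma$ to form $\widetilde{X}=X\#n\overline{\mathbb{CP}^2}$ with exceptional divisors $E_1,\dots,E_n$, and let $\widetilde{\Sigma}$ denote the proper transform of $\Sigma$. A direct intersection computation shows $\widetilde{\Sigma}$ has the same genus $g$ and satisfies $[\widetilde{\Sigma}]^2=0$. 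By the blow-up formula for the Ozsv\'ath--Szab\'o invariant, for each choice of signs $\epsilon_i\in\{\pm 1\}$, the extension $\widetilde{\mathfrak{s}}$ of $\mathfrak{s}$ with $\langle c_1(\widetilde{\mathfrak{s}}),E_i\rangle=\epsilon_i$ is a basic class on $\widetilde{X}$. Choosing all $\epsilon_i$ to agree with the sign of $\langle c_1(\mathfrak{s}),[\Sigma]\rangle$ yields $|\langle c_1(\widetilde{\mathfrak{s}}),[\widetilde{\Sigma}]\rangle|=|\langle c_1(\mathfrak{s}),[\Sigma]\rangle|+n$, so the $n=0$ case applied to $(\widetilde{X},\widetilde{\Sigma},\widetilde{\mathfrak{s}})$ gives the general inequality.

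Now assume $n=0$, and let $N\cong D^2\times\Sigma$ be a tubular neighborhood of $\Sigma$ with $Y:=\partial N\cong S^1\times\Sigma$, so that $X=N\cup_Y W$. Puncturing $X$ at two points of $W$ and inserting an admissible cut there, I would use the commutative diagram that defines $F^{\mathrm{mix}}_{X,\mathfrak{s}}$ together with the composition law \eqref{eqn:composition} to factor this mixed invariant through cobordism-induced homomorphisms passing through the Heegaard Floer groups of $Y$ in the Spin$^c$ structure $\mathfrak{t}:=\mathfrak{s}|_Y$. Since $\langle c_1(\mathfrak{t}),[\Sigma]\rangle=\langle c_1(\mathfrak{s}),[\Sigma]\rangle$, the hypothesis $|\langle c_1(\mathfrak{s}),[\Sigma]\rangle|>2g-2$ would imply, by the three-dimensional adjunction inequality for $S^1\times\Sigma_g$, that $HF^+(Y,\mathfrak{t})=0$. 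This vanishing forces the factorization to be zero, hence $\Phi_{X,\mathfrak{s}}\equiv 0$, contradicting the hypothesis that $\mathfrak{s}$ is basic.

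The main obstacle is establishing the three-dimensional vanishing $HF^+(S^1\times\Sigma_g,\mathfrak{t})=0$ for $|\langle c_1(\mathfrak{t}),[\Sigma]\rangle|>2g-2$. This is proven in Ozsv\'ath--Szab\'o's foundational work by constructing an explicit Heegaard diagram for $S^1\times\Sigma_g$ and identifying the resulting Floer chain complex with a Koszul-type model built from $H^1(\Sigma)$, whose generators occupy only a bounded range of Spin$^c$ gradings determined by the genus. A secondary technical point is verifying admissibility of the chosen cut in the sense of the previous section, i.e.\ both $b_2^+\geq 1$ conditions and the triviality of $\delta H^1(Y)\to H^2(X)$; the first is arranged by exploiting $b_2^+(X)\geq 2$, and the second follows from the product structure of $Y$ together with the nontriviality of $[\Sigma]$ in $H_2(X)$.
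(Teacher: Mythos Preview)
The paper does not give a proof of this theorem; it is stated as a result of Ozsv\'ath--Szab\'o \cite{OS6} and used as a black box in the proof of Theorem~\ref{distinct}. There is therefore no in-paper argument to compare against.

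Your sketch is essentially the original argument from \cite{OS6}: reduce to self-intersection zero by the blow-up formula, then kill the mixed map by factoring through $HF^\circ(S^1\times\Sigma_g,\mathfrak{t})$, which vanishes when $|\langle c_1(\mathfrak{t}),[\Sigma]\rangle|>2g-2$ by the three-dimensional adjunction inequality of \cite{OS5}. One technical point deserves tightening: with both punctures placed in $W$, the hypersurface $Y=\partial N$ does not separate the two $S^3$ boundary components of the punctured manifold, so the composition law \eqref{eqn:composition} does not directly yield a factorization of the mixed map through $HF^\circ(Y,\mathfrak{t})$. Puncture instead once in $N$ and once in $W$; then $Y$ separates, the $N$-side has $b_2^+=0$ so the admissible cut lies on the $W$-side, and the $HF^-$ half of the mixed map begins with $F^-_{N\setminus B^4}$ landing in $HF^-(Y,\mathfrak{t})=0$. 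Your closing remark about $\delta H^1$ should accordingly refer to the cut hypersurface chosen inside $W$, not to $Y$ itself.
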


The following is another version of the adjunction inequality along with a non-vanishing result of the $4$--manifold invariant for Lefschetz fibrations.

\begin{theo}\label{OSadjunc}(\cite{OS7}) Let $\pi :X\to S^2$ be a relatively minimal Lefschetz fibration over sphere with generic fiber $F$ of genus $g>1$, and $b_2^+>1$. Then for the canonical $\mathrm{spin}^c$ structure $\mathfrak{s}$ the map $F_{X,\mathfrak{s}}^{\mathrm{mix}}$ sends the generator of $HF^{-}_{-2}(S^3)$ to the generator of $HF_0 ^+(S^3)$ (and vanishes on the rest of $HF^{-}(S^3)$). In particular $\mathfrak{s}$ is a basic class. For any other $\mathrm{spin}^c$ structure $\mathfrak{t}\neq \mathfrak{s}$ with $\langle c_1(\mathfrak{t}),[F]\rangle \leq 2-2g=\langle c_1(\mathfrak{s}),[F]\rangle$, the map $F_{X,\mathfrak{t}} ^{\mathrm{mix}}$ vanishes. 

\end{theo}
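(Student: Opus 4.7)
The plan is to treat the two assertions separately: the vanishing of $F^{\mathrm{mix}}_{X,\mathfrak{t}}$ for $\mathfrak{t}\neq\mathfrak{s}$ with $\langle c_1(\mathfrak{t}),[F]\rangle\le 2-2g$, and the explicit evaluation $F^{\mathrm{mix}}_{X,\mathfrak{s}}(\Theta^-_{(-2)})=\Theta^+_{(0)}$ on the canonical $\mathrm{spin}^c$ structure. For the vanishing part, Theorem \ref{adjuntion} applied to a regular fiber $F$ (with $[F]\cdot[F]=0$ and $g>1$) immediately yields $|\langle c_1(\mathfrak{t}),[F]\rangle|\le 2g-2$ for every basic class, so the mixed invariant vanishes whenever $\langle c_1(\mathfrak{t}),[F]\rangle<2-2g$. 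The remaining case is the extremal equality $\langle c_1(\mathfrak{t}),[F]\rangle=2-2g$ with $\mathfrak{t}\neq\mathfrak{s}$: I would decompose $X$ along a tubular neighborhood of a regular fiber, writing $X=N(F)\cup_Y X^\circ$ with $Y\cong F\times S^1$, and apply the composition law (\ref{eqn:composition}) to express $F^{\mathrm{mix}}_{X,\mathfrak{t}}$ as a sum over $\mathrm{spin}^c$ extensions of a fixed restriction to $Y$. The extremal pairing forces $\mathfrak{t}|_Y$ to sit in the top $\mathrm{spin}^c$ stratum of $Y$, and combining this with the topology of $X^\circ$ (relative to the Lefschetz structure) should pin down $\mathfrak{s}$ as the unique surviving extension.

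For the nonvanishing on $\mathfrak{s}$, I would invoke Gompf's theorem to equip $X$ with a symplectic form compatible with $\pi$, so that $\mathfrak{s}$ is the canonical $\mathrm{spin}^c$ structure of an $\omega$-compatible almost complex structure. Extract a Stein subdomain $W\subset X$ whose boundary $(Y,\xi)$ is Stein fillable; the complement of $W$ in $X$ is then the concave symplectic filling $V_0\cup V_1$ of Theorem \ref{concavefill}. The class $F^{\mathrm{mix}}_{X,\mathfrak{s}}(\Theta^-_{(-2)})$ factors through the Ozsv\'ath--Szab\'o contact invariant $c^+(\xi)\in HF^+(-\partial W)$, which is nonzero precisely because $\xi$ is Stein fillable, and the cap cobordism $V_0\cup V_1$ carries this class to $\Theta^+_{(0)}\in HF^+_0(S^3)$ (using that $V_1$ admits a Lefschetz fibration with closed fibers so that the degree shift works out by (\ref{eqn:degree})). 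The vanishing of $F^{\mathrm{mix}}_{X,\mathfrak{s}}$ on $U^n\Theta^-_{(-2)}$ for $n\ge 1$ is then automatic from (\ref{eqn:degree}), since those elements have degree strictly less than $-2$ and $HF^+_k(S^3)=0$ for $k<0$.

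The hard part is the extremal case of the vanishing statement, where the classical adjunction inequality is sharp and thus gives no direct obstruction. Here one is forced into a delicate analysis of the Heegaard Floer invariants of the fibered $3$--manifold $F\times S^1$ in the top $\mathrm{spin}^c$ stratum, together with a uniqueness argument for $\mathrm{spin}^c$ extensions across the cobordism $X^\circ$ that is adapted to the Lefschetz fibration. A subsidiary obstacle is identifying the image of the contact class $c^+(\xi)$ under the cap cobordism $V_0\cup V_1$ with the generator of $HF^+_0(S^3)$; this requires carefully tracking $\mathrm{spin}^c$ structures through the composition law (\ref{eqn:composition}) and exploiting that $V_1$ itself is a Lefschetz fibration for which the canonical $\mathrm{spin}^c$ structure is the only one whose mixed invariant survives the degree calculation.
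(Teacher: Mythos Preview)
The paper does not prove this statement: Theorem~\ref{OSadjunc} is quoted from \cite{OS7} and used as an input to the rest of the argument, so there is no proof here to compare against. That said, your proposed argument for the nonvanishing half is circular. You want to factor $F^{\mathrm{mix}}_{X,\mathfrak{s}}$ through the contact invariant of a Stein subdomain and then invoke Lemma~\ref{concavehit} for the concave complement, but Lemma~\ref{concavehit} is itself proved in \cite{P} by applying exactly the results of \cite{OS7} that constitute Theorem~\ref{OSadjunc} (the computation $HF^+(-Y_0,\mathfrak{t}_0)\cong\mathbb{Z}$ for the fibered $3$--manifold in the extremal $\spinc$ structure, together with the uniqueness of the canonical class among those with extremal pairing on the fiber). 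The logical flow in \cite{OS7}, \cite{OS2}, \cite{P} runs the other way: one first establishes the Lefschetz--fibration nonvanishing directly, and only then deduces the behavior of $c^+(\xi)$ and of concave caps. There is also no guarantee that an arbitrary relatively minimal Lefschetz fibration $X$ splits as a Stein piece together with a concave cap of the specific form $V_0\cup V_1$ produced by Theorem~\ref{concavefill}, which is what Lemma~\ref{concavehit} requires.

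Two smaller points. You have the roles of $W$ and $V_0\cup V_1$ reversed: in the decomposition used in this paper the concave piece $V$ is the cobordism $S^3\to -\partial W$ whose mixed map \emph{produces} $c^+(\xi)$, while the Stein piece $W$ (turned upside down) is the cobordism $-\partial W\to S^3$ that carries $c^+(\xi)$ to $\Theta^+_{(0)}$. And for the extremal vanishing case $\langle c_1(\mathfrak{t}),[F]\rangle=2-2g$ with $\mathfrak{t}\neq\mathfrak{s}$, you correctly flag this as the hard part but give only a heuristic; the actual content is precisely the analysis in \cite{OS7} of the extremal $\spinc$ stratum on a surface bundle, which would have to be reproduced rather than cited.
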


Given a contact structure $\xi$ on $Y$, let $\mathfrak{t}_\xi$ be the induced $\mathrm{Spin}^c$ structure.  A Heegaard-Floer (co-)homology class $c^+\in HF^+(-Y,\mathfrak{t}_\xi)$, which is an invariant of the isotopy class of $\xi$, is constructed in \cite{OS2} as follows: Take an adapted open book decomposition for $\xi$ which has a unique binding component. One can always find such an open book by doing positive stabilizations to any adapted open book as necessary. Let $Y_0$ denote the result of the $0$--surgery on the binding, and $V_0:Y\to Y_0$ be the associated cobordism. Naturally, $Y_0$  admits a fibration over circle. Let $\mathfrak{t}_0$ the $\mathrm{Spin}^c$ structure corresponding tangent plane distribution of fibers. 

\begin{prop}(\cite{OS7})
$HF^+(-Y_0,\mathfrak{t}_0)=\mathbb{Z}$
\end{prop}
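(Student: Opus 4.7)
The plan is to combine the three-dimensional Heegaard Floer adjunction inequality with a direct calculation using a Heegaard diagram adapted to the fibration structure of $Y_0$. First I would observe that $Y_0$ is a closed surface bundle over $S^1$ with fiber $\widehat{F}$ of genus $g$, obtained from the page $F$ of the chosen open book by capping off the boundary $B$ after the $0$--surgery; the induced monodromy $\widehat{\phi}$ agrees with the open book monodromy on $F$ and is the identity on the capping disk. The $\spinc$ structure $\mathfrak{t}_0$, corresponding to the tangent plane distribution of the fibers, is characterized by $\langle c_1(\mathfrak{t}_0),[\widehat{F}]\rangle = 2-2g$, realizing the extremal case of the adjunction bound $|\langle c_1(\mathfrak{t}),[\widehat{F}]\rangle| \leq 2g-2$ that is necessary for $HF^+(-Y_0,\mathfrak{t})$ to be non-vanishing.

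Next I would construct a Heegaard diagram compatible with the fibration. Doubling the fiber gives a Heegaard surface $\Sigma$ of genus $2g$: glue two copies $\widehat{F}_0$ and $\widehat{F}_1$ along their boundaries after removing small disks, take the $\alpha$-curves to be a cut system on $\widehat{F}_0$, and take the $\beta$-curves to be the images of these under $\widehat{\phi}$, pushed into $\widehat{F}_1$. In this diagram, the generators of $\widehat{CF}(Y_0)$ lying in $\mathfrak{t}_0$ correspond to $g$-tuples of intersection points achieving the extremal value of the pairing with $[\widehat{F}]$, which by a relative Chern class computation forces each coordinate to lie in a prescribed region of $\Sigma$.

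The final step is to show that in the extremal $\spinc$ structure $\mathfrak{t}_0$ there is a unique such generator, and that the $U$--action produces a single $\mathbb{Z}[U]$-tower whose reduced part vanishes, so that $HF^+(-Y_0,\mathfrak{t}_0) = \mathbb{Z}$. The main technical obstacle is the uniqueness of the top generator: one must arrange the Heegaard data so that the $\widehat{\phi}$-action on $\mathrm{Sym}^g(\widehat{F})$ yields exactly one fixed $g$-tuple realizing the extremal grading, and then verify that no holomorphic disks contribute to the differential out of this generator within the same $\spinc$ class. Combined with the adjunction bound, which truncates the tower from above, this produces the desired single copy of $\mathbb{Z}$; the combinatorial core of this analysis is the content of the argument in \cite{OS7}.
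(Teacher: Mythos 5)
The paper does not prove this Proposition; it is quoted as a result of Ozsv\'ath and Szab\'o and cited to \cite{OS7}, so there is no in-paper argument to compare against. What you have written is a plausible-sounding reconstruction, but it has several problems even as a sketch.

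First, a concrete error: on the genus-$2g$ Heegaard surface $\Sigma$ built by doubling the capped-off page $\widehat F$, the generators of $\widehat{CF}(Y_0)$ are $2g$-tuples of intersection points, not $g$-tuples. The combinatorics you would need to localize are on $\mathrm{Sym}^{2g}(\Sigma)$, and the mechanism that singles out an extremal generator is a relative-grading/winding argument, not a count of fixed $g$-tuples of $\widehat\phi$ in $\mathrm{Sym}^g(\widehat F)$. Second, the $\spinc$ structure $\mathfrak{t}_0$ satisfies $\langle c_1(\mathfrak{t}_0),[\widehat F]\rangle = 2-2g \neq 0$ (recall $g>1$ after stabilizing), so $c_1(\mathfrak{t}_0)$ is \emph{non}-torsion. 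Consequently $HF^\infty(-Y_0,\mathfrak{t}_0)=0$ and $HF^+(-Y_0,\mathfrak{t}_0)$ is a finitely generated abelian group on which $U$ is nilpotent; the language of ``a single $\mathbb{Z}[U]$-tower whose reduced part vanishes'' belongs to the torsion case and does not apply here. The desired conclusion $HF^+(-Y_0,\mathfrak{t}_0)\cong\mathbb{Z}$ really is the statement that this finite group is a single copy of $\mathbb{Z}$ in a single degree, with $U$ acting trivially. Third, and most importantly, the uniqueness-and-no-differentials step, which you yourself identify as the crux, is simply deferred back to \cite{OS7}; as written, the sketch is circular at exactly the point where the substance lies. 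For the record, the argument in \cite{OS7} for fibered three-manifolds is not a bare generator count in a doubled-page diagram; it proceeds through the surgery exact sequence and four-manifold considerations tied to the Lefschetz-fibration structure, so even a corrected version of your plan would constitute a different route than the one being cited.
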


Let $c$ be a generator of $HF^+(-Y_0,\mathfrak{t}_0)$. It can be shown  that there is a unique extention $\mathfrak{s}$ of the $\mathrm{Spin}^c$ structure $\mathfrak{t}_0$ over the cobordism $W$. The contact invariant is defined to be the image of $c$ under the homomorphism which induced by the $\mathrm{Spin}^c$ cobordism $(V_0,\mathfrak{s})$.

\begin{defn}
$c^+(\xi):=F^+_{V_0,\mathfrak{s}}(c)\in HF^+(-Y,\mathfrak{t}_\xi)/(\pm 1)$.
\end{defn}

Note that we turned $V_0$ upside down in this construction. This invariant is independent of the choice of the adapted open book decomposition used in its definition. 

\vspace{.05in}

By using this definition along with the adjunction inequality and the symplectic compactification theorem, it can be proven that  the contact invariant of a Stein fillable contact structure is in the image of the mixed homomorphism of some concave filling, \cite{P}. From Theorem \ref{concavefill}, we know that every Stein fillable contact manifold $(Y,\xi)$ admits a concave symplectic filling $V=V_0\cup V_1$ where  $V_0$ is a cap off cobordism and $V_1$ is a Lefschetz fibration over a disk. Let $\mathfrak{s}$ be the canonical $\spinc$ structure.

\vspace{.05in}

\begin{lemm}(\cite{P})\label{concavehit}
$F^{\mathrm{mix}}_{V,\mathfrak{s}}(\Theta^{-}_{(-2)})=\pm c^+(\xi)$ if $c_1(\xi)$ is torsion.
\end{lemm}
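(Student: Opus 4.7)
The plan is to evaluate both sides under the decomposition $V = V_0 \cup V_1$ at $Y_0$ and identify them using Theorem~\ref{OSadjunc} applied to a closed Lefschetz fibration built from $V$.

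I would first puncture $V_1$ so that $V_1^\circ: Y_0 \to S^3$ is a cobordism, then apply the admissible cut of $V = V_0 \cup V_1^\circ$ along $Y_0$; the construction of Theorem~\ref{concavefill} is set up so that each piece has $b_2^+ \ge 1$. The definition of the mixed homomorphism gives
\[
F^{\mathrm{mix}}_{V,\mathfrak{s}}(\Theta^-_{(-2)}) = F^+_{V_0,\mathfrak{s}|_{V_0}}\bigl(\delta^{-1}F^-_{V_1^\circ,\mathfrak{s}|_{V_1^\circ}}(\Theta^-_{(-2)})\bigr),
\]
while by definition $c^+(\xi) = F^+_{V_0,\mathfrak{s}|_{V_0}}(c)$ with $c$ a generator of $HF^+(-Y_0,\mathfrak{t}_0) \cong \mathbb{Z}$. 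Since $F^\infty_{V_0} = 0$, the map $F^+_{V_0}$ kills $\ker\delta = \mathrm{im}(\pi)$; writing $\delta^{-1}F^-_{V_1^\circ}(\Theta^-_{(-2)}) \equiv n\,c \pmod{\ker\delta}$ reduces the lemma to forcing $|n|=1$ in
\[
F^{\mathrm{mix}}_{V,\mathfrak{s}}(\Theta^-_{(-2)}) = n\cdot c^+(\xi).
\]

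Pinning down $|n|=1$ is the heart of the matter. I would embed $V$ into a closed Lefschetz fibration: pick any Stein filling $W$ of $(Y,\xi)$ and form $X := W \cup V$, which by Theorem~\ref{concavefill} is a closed Lefschetz fibration over $S^2$ with $b_2^+(X) \ge 2$. The canonical $\mathrm{Spin}^c$ structure $\mathfrak{s}_X$ on $X$ restricts to $\mathfrak{s}$ on $V$, and Theorem~\ref{OSadjunc} gives $F^{\mathrm{mix}}_{X^\circ,\mathfrak{s}_X}(\Theta^-_{(-2)}) = \pm\Theta^+_{(0)}$. Placing the admissible cut of $X^\circ$ inside $V$ at $Y_0$ and invoking the composition law~\eqref{eqn:composition}, I get
\[
F^+_{W^\circ,\mathfrak{s}_X|_{W^\circ}} \circ F^{\mathrm{mix}}_{V,\mathfrak{s}} = \sum_{\mathfrak{t}} F^{\mathrm{mix}}_{X^\circ,\mathfrak{t}},
\]
where $\mathfrak{t}$ ranges over $\mathrm{Spin}^c$ extensions restricting to $\mathfrak{s}$ on $V$ and to $\mathfrak{s}_X|_{W^\circ}$ on $W^\circ$. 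Every such $\mathfrak{t}$ satisfies $\langle c_1(\mathfrak{t}),[F]\rangle = \langle c_1(\mathfrak{s}_X),[F]\rangle = 2-2g$, because the generic fiber class $[F]$ lies inside $V$; the vanishing clause of Theorem~\ref{OSadjunc} then kills every term with $\mathfrak{t}\ne\mathfrak{s}_X$. Combined with the first display,
\[
\pm\Theta^+_{(0)} = F^+_{W^\circ}\bigl(n\cdot c^+(\xi)\bigr) = n\cdot F^+_{W^\circ}(c^+(\xi))
\]
in $HF^+_0(S^3) \cong \mathbb{Z}$, which forces $|n| = 1$ and, as a by-product, identifies $F^+_{W^\circ}(c^+(\xi))$ as a generator.

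The main obstacle is the $\mathrm{Spin}^c$ bookkeeping that collapses the sum in the composition law to the single canonical term via Theorem~\ref{OSadjunc}: one must check admissibility of the cuts, verify that the fiber class indeed forces $\langle c_1,[F]\rangle = 2-2g$ on every extension considered, and keep track of the degree shifts~\eqref{eqn:degree} so that $\Theta^-_{(-2)}$ lands on $\Theta^+_{(0)}$. The remaining steps are formal manipulations of the long exact sequence~\eqref{longexct} and of~\eqref{eqn:composition}.
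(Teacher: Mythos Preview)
The paper does not give its own proof of Lemma~\ref{concavehit}; it is quoted from \cite{P}, with only the remark that the argument ``relies on the special topology of the concave filling constructed in Theorem~\ref{concavefill}.'' So there is nothing in the paper to compare against line by line. That said, your outline is essentially Plamenevskaya's argument: factor the mixed map through $HF^+(-Y_0,\mathfrak{t}_0)\cong\mathbb{Z}$, recognize the $V_0$-piece as the very definition of $c^+(\xi)$, and pin the remaining integer to $\pm 1$ by capping with a Stein filling $W$ and invoking Theorem~\ref{OSadjunc} for the resulting closed Lefschetz fibration. The use of the vanishing clause of Theorem~\ref{OSadjunc} to collapse the sum in the composition law---via the observation that $[F]$ lives in $V$---is exactly the right mechanism.

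There is one genuine soft spot. You assert that the cut at $Y_0$ is admissible because ``each piece has $b_2^+\ge 1$.'' Theorem~\ref{concavefill} only guarantees $b_2^+(V)\ge 2$; it says nothing about $b_2^+(V_0)$, and $V_0$ is a \emph{single} $2$--handle cobordism, so this claim is not justified (and the condition $\delta H^1(Y_0)=0$ in $H^2(V)$ is also unclear, since $b_1(Y_0)\ge 1$). The fix is not to argue admissibility at all, but to use the fact that for $g>1$ the $\spinc$ structure $\mathfrak{t}_0$ on $Y_0$ is non-torsion (indeed $\langle c_1(\mathfrak{t}_0),[F]\rangle=2-2g\ne 0$), whence $HF^\infty(-Y_0,\mathfrak{t}_0)=0$. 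Thus $\delta$ is an isomorphism on this $\spinc$ summand, $\ker\delta=0$, and your displayed factorization through $-Y_0$ is literally valid without any $b_2^+$ hypothesis on $V_0$; one then places a genuine admissible cut inside $V_1$ (which carries all of the $b_2^+$) and checks that the two descriptions agree via the composition law. With that correction, your sketch goes through.
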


This lemma will play a key role in our argument. Its proof relies on the special topology of the concave filling constructed in Theorem \ref{concavefill}. One would hope to prove it for arbitrary concave fillings (with $b_2^+\geq 2$), but the authors do not know  how to do it in full generality.

We are going to need a variant of  Lemma \ref{concavehit} where one is allowed to add any Stein cobordism to the concave filling.

\begin{lemm}\label{concavehitit}
Let $(Y,\xi)$ be a Stein fillable contact manifold with torsion $c_1(\xi)$. Let $M$ be any Stein cobordism built on $(Y,\xi)$ which does not contain any $1$--handles. Then $M$ can be extended to a concave filling $V$ of $(Y,\xi)$ such that  $F^{\mathrm{mix}}_{V,\mathfrak{s}}(\Theta^{-}_{(-2)})=\pm c^+(\xi)$.
\end{lemm}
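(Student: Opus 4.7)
The plan is to build the desired $V$ by adjoining to $M$ a concave filling of its upper boundary. Let $(Y',\xi')$ denote the upper (convex) boundary of $M$ with its induced contact structure. Since $M$ is a Stein cobordism built on the Stein fillable manifold $(Y,\xi)$, the contact manifold $(Y',\xi')$ is itself Stein fillable: any Stein filling $W$ of $(Y,\xi)$ yields a Stein filling $W\cup M$ of $(Y',\xi')$. Applying Theorem \ref{concavefill} to $(Y',\xi')$, we obtain a concave symplectic filling $\widetilde V = \widetilde V_0 \cup \widetilde V_1$ with $b_2^+(\widetilde V)\geq 2$. Define $V := M \cup \widetilde V$, glued along $Y'$. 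The Stein symplectic form on $M$ and the symplectic form on $\widetilde V$ match up along $Y'$ (after a contact-preserving isotopy at $\xi'$) to produce a symplectic form on $V$, and the concave boundary of $M$ at $(Y,\xi)$ becomes the sole boundary of $V$. Thus $V$ is a concave symplectic filling of $(Y,\xi)$ extending $M$, as required.

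Let $\mathfrak{s}$ be the canonical Spin$^c$ structure on $V$, restricting to the canonical Spin$^c$ structure $\mathfrak{s}_M$ on $M$ (induced by the Stein structure) and to $\widetilde{\mathfrak{s}}$ on $\widetilde V$. Since $b_2^+(\widetilde V)\geq 2$, an admissible cut for the mixed invariant of $V$ can be chosen inside $\widetilde V$; in particular, $b_2^+(V)\geq 2$ and the mixed invariant is defined. With this cut in place, the composition law \eqref{eqn:composition} together with the no-$1$-handle hypothesis on $M$ (which ensures the Spin$^c$ extension to $V$ is unique, so that the summation reduces to the single class $\mathfrak{s}$) factorizes the mixed invariant as
$$F^{\mathrm{mix}}_{V,\mathfrak{s}}(\Theta^-_{(-2)}) \;=\; F^+_{-M,\,\mathfrak{s}_M}\bigl(F^{\mathrm{mix}}_{\widetilde V,\,\widetilde{\mathfrak{s}}}(\Theta^-_{(-2)})\bigr).$$

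Applying Lemma \ref{concavehit} to $\widetilde V$ gives $F^{\mathrm{mix}}_{\widetilde V,\widetilde{\mathfrak{s}}}(\Theta^-_{(-2)}) = \pm c^+(\xi')$. Combining this with the naturality of the Ozsv\'ath-Szab\'o contact invariant under $2$-handle Stein cobordisms, namely $F^+_{-M,\mathfrak{s}_M}(c^+(\xi')) = \pm c^+(\xi)$, we conclude the desired equality $F^{\mathrm{mix}}_{V,\mathfrak{s}}(\Theta^-_{(-2)}) = \pm c^+(\xi)$. The main technical point is the composition-law factorization: one must verify that the admissible cut inside $\widetilde V$ remains admissible for $V=M\cup\widetilde V$ (no $b_2^+$ is lost when gluing on $M$), and that the sum in \eqref{eqn:composition} over Spin$^c$ extensions reduces to a single term. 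The no-$1$-handle hypothesis on $M$ is precisely what enforces the latter, and is the reason it appears in the statement. Everything else is a direct combination of Theorem \ref{concavefill}, Lemma \ref{concavehit}, and the functoriality of $c^+$ under Stein $2$-handle cobordisms.
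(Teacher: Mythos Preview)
Your construction of $V = M \cup \widetilde V$ matches the paper's opening move exactly, but the two arguments diverge at the final step. The paper does \emph{not} factor the mixed map through $F^{\mathrm{mix}}_{\widetilde V}$ and then invoke naturality of $c^+$ under Stein $2$--handle cobordisms. Instead, it reorders the $2$--handles: since the PALF used to produce $\widetilde V = V_0 \cup V_1$ was built from the Stein filling $W\cup M$ of $(Y',\xi')$, the Legendrian $2$--handles of $M$ are already vanishing cycles of that PALF, and one may commute them past the cap-off cobordism $V_0$. This rewrites the concave filling as $V' = V_0 \cup (M\cup V_1)$, where $M\cup V_1$ is itself a Lefschetz fibration over the disk; Lemma~\ref{concavehit} then applies \emph{directly} to $V'$ as a concave filling of $(Y,\xi)$. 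The no-$1$-handle hypothesis is used precisely to make this reordering possible, so that $M$ can be absorbed into the Lefschetz part.

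Your route, by contrast, has a genuine gap. Applying Lemma~\ref{concavehit} to $\widetilde V$ requires $c_1(\xi')$ to be torsion, and nothing in the hypotheses guarantees this: only $c_1(\xi)$ is assumed torsion, and a Stein $2$--handle attachment can certainly produce a $Y'$ with $b_1(Y')>0$ and non-torsion $c_1(\xi')$. (In the paper's actual application $Y'$ happens to be a homology sphere, so your argument would succeed there, but not for the lemma as stated.) Your explanation of why the no-$1$-handle hypothesis is needed---uniqueness of the $\spinc$ extension across $Y'$---is also off: that uniqueness is governed by the surjectivity of $H^1(M)\oplus H^1(\widetilde V)\to H^1(Y')$, which is not what ``no $1$--handles in $M$'' controls. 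The paper's handle-reordering trick sidesteps both issues simultaneously and is the intended argument.
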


\begin{proof}
Let $(Y_1,\xi_1)$ be the convex end of $M$. Take a Stein filling of $(Y,\xi)$ and glue it to $M$ in order to obtain a Stein filling of $(Y_1,\xi_1)$. Pick a PALF of this Stein manifold and apply the algorithm in the proof of the Theorem \ref{concavefill} to find a concave filling $V=V_0\cup V_1$ of $(Y_1,\xi_1)$. If we glue this to $M$, we get a concave filling $V'$ of $(Y,\xi)$. By changing the order of some $2$--handle attachments we can write this as  $V'=V_0\cup V'_1$, where $V'_1=M\cup V_1$ which is a Lefschetz fibration on disk. Now apply Lemma \ref{concavehit}.
\end{proof}

\vspace{0.2cm}

It is possible to define a relative version of Ozsv\'ath--Szab\'o $4$--manifold invariant in the presence of a contact structure on the boundary. For let $W$ be a $4$--manifold with connected boundary and $\xi$ be a contact structure on $\partial W$. Given a $\spinc$ structure $\mathfrak{s}$ on $W$, the relative invariant $\Phi_{W,\mathfrak{s}}(\xi)\in \mathbb{Z}/\pm 1$ is uniquely characterized by the formula

$$F^+_{W,\mathfrak{s}}(c^+(\xi))=\Phi_{W,\mathfrak{s}}(\xi)\Theta^+_{(0)}.$$

Again we puncture $W$ and turn it upside down to regard it as a cobordism from $-\partial W$ to $S^3$. A twisted version of this invariant is conjecturally equivalent to the relative Seiberg--Witten invariant defined in \cite{KM}.

\section{main theorem}\label{sec:main}

With all the necessary tools in hand, we are ready to prove our main result. Henceforth suppose that $W$ is a cork corresponding to an admissible link $L$. Let $\xi$ be the induced contact structure for some choice of a Stein structure on $W$. Let $\tau$ be the involution on $\partial W$ obtained by exchanging the components of $L$. Let $\xi '$ be the pull back contact structure $\tau^*\xi$.

\vspace{.05in} 

\begin{theo}\label{distinct}
The contact invariants $c^+(\xi)$ and $c^+(\xi ')$ are distinct elements of $HF^+(-\partial W)$. Moreover, both  elements descent non-trivially to $HF^+_{\mathrm{red}}(-\partial W)$. 
\end{theo}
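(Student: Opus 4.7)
The plan is to detect the distinction $c^+(\xi)\ne c^+(\xi')$ by realizing each class as the image of a non-trivial mixed invariant of a Lefschetz-fibered closed $4$-manifold containing $W$, and then producing an auxiliary Stein cobordism that behaves asymmetrically with respect to $\xi$ and $\xi'$. First, I would exploit the $\tau$-symmetry of the admissible link $L$ to obtain two Stein structures $J,J'$ on the same smooth $W$ (interchanging the roles of $K_1$ and $K_2$ in the Legendrian/dotted decoration), with $\partial J=\xi$ and $\partial J'=\xi'$. Applying Theorem \ref{concavefill} to $(W,J)$ and $(W,J')$ yields closed symplectic $4$-manifolds $X=W\cup V$ and $X'=W\cup V'$ admitting Lefschetz fibrations over $S^2$ with $b_2^+\ge 2$ and high-genus generic fibers. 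Theorem \ref{OSadjunc} identifies the canonical $\spinc$ structures $\mathfrak{s},\mathfrak{s}'$ as basic classes sending $\Theta^{-}_{(-2)}$ to $\Theta^{+}_{(0)}$, and Lemma \ref{concavehit} then writes $c^+(\xi)=\pm F^{\mathrm{mix}}_{V,\mathfrak{s}|_V}(\Theta^{-}_{(-2)})$ and similarly for $c^+(\xi')$.

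To distinguish the two invariants, I would attach a carefully chosen Stein $2$-handle $h$ to $(W,J)$ along a Legendrian knot $\gamma\subset\partial W$ whose $\tau$-image $\tau(\gamma)$ fails to be Legendrian in $(\partial W,\xi)$ with sufficient Thurston-Bennequin framing; the existence of such a $\gamma$ is afforded by the asymmetric role played by $K_2$ (Legendrian with $tb\ge 1$) versus $K_1$ (dotted unknot) in the admissible handle description. Using Lemma \ref{concavehitit}, extend the Stein cobordism $W\cup h$ to a concave filling whose mixed invariant computes $c^+(\xi)$ and whose closed compactification contains a Lefschetz fibration exhibiting $\gamma$ as a Legendrian feature on the $\xi$-side. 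Attaching the same $h$ to $(W,J')$ yields only a topological cobordism; the resulting closed $4$-manifold then contains an embedded surface (coming from the $0$-framed $2$-handle dual to the interchanged dotted component) whose genus/self-intersection data forces the fiber adjunction inequality of Theorem \ref{OSadjunc} to rule the extended $\spinc$ structure out as a basic class, so the corresponding mixed invariant vanishes. Running the composition law for cobordism maps then gives $c^+(\xi)\ne c^+(\xi')$.

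For the claim that both classes descend non-trivially to $HF^+_{\mathrm{red}}(-\partial W)$, I would use the general structure of mixed invariants: $F^{\mathrm{mix}}=F^+\circ\delta^{-1}\circ F^-$ factors through the reduced Floer homology of the intermediate $3$-manifold of the admissible cut via $\delta^{-1}$, and cobordism-induced maps $F^+$ preserve the reduced subgroup since $U$-torsion maps to $U$-torsion under $U$-equivariance. Thus $c^+(\xi)$ and $c^+(\xi')$, being images of mixed invariants by Lemma \ref{concavehit}, automatically land in $HF^+_{\mathrm{red}}(-\partial W)$, and non-triviality there reduces to non-vanishing of the contact invariants themselves, which is implied by the non-vanishing of $\Phi_{X,\mathfrak{s}}$ and $\Phi_{X',\mathfrak{s}'}$ guaranteed by Theorem \ref{OSadjunc} together with the composition law.

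The principal obstacle will be the selection of the Legendrian knot $\gamma$ and the adjunction-based vanishing argument on the $\xi'$-side: one must identify an embedded surface in the compactification for which the fiber genus inequality of Theorem \ref{OSadjunc} (or the general version in Theorem \ref{adjuntion}) obstructs the $\spinc$ structure inherited from $J'$. This asymmetric behavior ultimately stems from the non-$\tau$-equivariance of the Akbulut-Ozbagci $PALF$ algorithm when applied to the two Stein structures $J$ and $J'$, together with the particular handlebody structure of $W$ coming from the symmetric link $L$.
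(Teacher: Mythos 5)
Your argument for the first assertion (that $c^+(\xi)\ne c^+(\xi')$) follows the same broad strategy as the paper: attach an auxiliary Stein $2$-handle to $(\partial W,\xi)$ that becomes non-Stein under the cork twist, extend to a concave filling via Lemma~\ref{concavehitit}, and use Theorem~\ref{OSadjunc} together with the composition law on one side and an adjunction-inequality violation on the other. But where you gesture at ``a carefully chosen Legendrian $\gamma$'' and ``an embedded surface coming from the $0$-framed $2$-handle dual to the interchanged dotted component,'' the paper supplies the specific input that makes the argument work: a trefoil with framing $1$ linking the dotted circle, which is Stein-attachable on the $\xi$-side precisely because it runs over the $1$-handle, and which after the cork twist becomes a genuine framing-$1$ $2$-handle on a trefoil in $S^3$. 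The embedded surface that kills the basic class is a torus of self-intersection $+1$ obtained by capping the trefoil's genus-one Seifert surface with the core disk of that handle; its self-intersection number alone violates Theorem~\ref{adjuntion}. Your description of the surface is not this one, and the part you flag as ``the principal obstacle'' is in fact the entire content of the argument, so as written the first half is more of an outline than a proof.

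The argument for the second assertion has a genuine gap. You claim that because $c^+(\xi)$ is the image of a mixed cobordism map, it ``automatically lands in $HF^+_{\mathrm{red}}(-\partial W)$,'' and that non-triviality there ``reduces to non-vanishing of the contact invariant.'' Both claims are false. First, $HF^+_{\mathrm{red}}$ is defined in the paper as $\mathrm{coker}(\pi)$, a quotient, not a subgroup that cobordism maps could ``preserve''; and the mixed map factoring through the reduced group of the \emph{intermediate} $3$-manifold says nothing about where its image sits relative to $\mathrm{im}(\pi)$ in the \emph{target}. Second, a nonzero contact class can certainly descend to zero in $HF^+_{\mathrm{red}}$: for $(S^3,\xi_{\mathrm{std}})$ the contact invariant is the bottom of the tower, $HF^+_{\mathrm{red}}(S^3)=0$, and yet $c^+(\xi_{\mathrm{std}})$ is nonzero and can be realized as a mixed invariant (e.g.\ of the fillings in Theorem~\ref{OSadjunc}). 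The paper's actual argument is different and uses the symmetry in an essential way: since $-\partial W$ is a homology sphere, any degree-preserving $U$-equivariant automorphism of $HF^\infty(-\partial W)\cong\mathbb{Z}[U,U^{-1}]$ is $\pm 1$, so the induced involution $\tau^*$ fixes $\mathrm{im}(\pi)\subset HF^+(-\partial W)$ up to sign; because the first half of the proof shows $\tau^*(c^+(\xi))\ne\pm c^+(\xi)$, the class $c^+(\xi)$ cannot lie in $\mathrm{im}(\pi)$, hence projects nontrivially to $\mathrm{coker}(\pi)=HF^+_{\mathrm{red}}(-\partial W)$, and likewise for $c^+(\xi')$. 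You would need to replace your reasoning here with this (or an equivalent) equivariance argument.
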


\vspace{.05in} 

\begin{proof}
The trick is to inflate the cork using a Stein handle so that the cork twist changes the framing on the handle. This trick was first used in \cite{AY1} to generate exotic Stein manifold pairs. We start by attaching a $2$-handle to $\partial W$ along a trefoil with framing 1 as in the left hand side of Figure \ref{fig:twist}. Thanks to the non-trivial linking with the $1$-handle, this handle attachment induces a Stein cobordism $M$ built on $(\partial W, \xi)$. On the right hand side of the same figure, however,  the handle attachment can not be realized as a Stein cobordism, because the maximum Thurston-Bennequin number of trefoil is $1$.

\begin{figure}[h]
	\includegraphics[width=.55\textwidth]{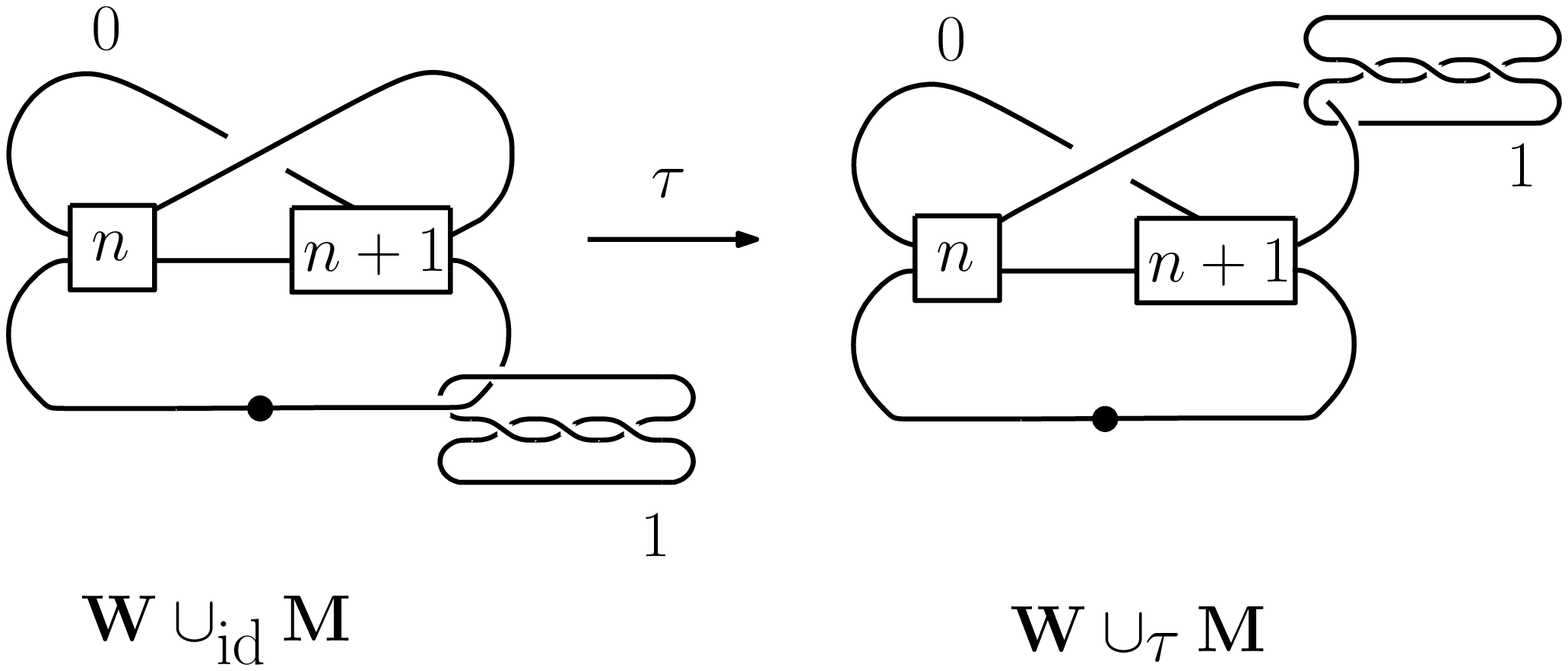}
	\caption{}
	\label{fig:twist}
\end{figure}

\vspace{.05in} 

Next we apply Lemma \ref{concavehitit}, in order  to extend $M$ to a concave filling $V$ of $(\partial W, \xi)$ whose mixed homomorphism hits the contact invariant $c^+(\xi)$.

\vspace{.05in}

The symplectic manifold $X:=M\cup V$ admits a relatively minimal Lefschetz fibration. Let $\mathfrak{s}$ denote its canonical $\spinc$ structure.  By Theorem \ref{OSadjunc}, Lemma \ref{concavehitit} and the compostion law, we have
\begin{eqnarray}\nonumber
\Theta^+_{(0)}&=&F^{\mathrm{mix}}_{W\cup V, \mathfrak{s}}(\Theta^-_{(-2)})\\
\nonumber &=& F^+_{W,\mathfrak{s}}\circ F^{\mathrm{mix}}_{V,\mathfrak{s}}(\Theta^-_{(-2)})\\
\nonumber &=&\pm F^+_{W,\mathfrak{s}}(c^+(\xi)).
\end{eqnarray}

\noindent In particular $F^+_{W,\mathfrak{s}}(c^+(\xi))\neq 0$. On the other hand if we remove the cork $W$ from $X$ and reglue it using the cork twist $\tau$, we obtain the manifold $X':=W\cup_\tau V$  which is homeomorphic to $X$. The right hand side of Figure \ref{fig:twist} is $W\cup _\tau M$ and it embeds into $X'$. In that figure the trefoil represents an embedded torus of self intersection $1$, which violates the adjunction inequality (Theorem \ref{adjuntion}). Therefore $X'$ has no basic class. This implies

\begin{eqnarray}\nonumber
0&=&F^{\mathrm{mix}}_{W\cup_\tau V, \mathfrak{s}}(\Theta^-_{(-2)})\\
\nonumber &=& F^+_{W,\mathfrak{s}}\circ \tau^*\circ F^{\mathrm{mix}}_{V,\mathfrak{s}}(\Theta^-_{(-2)})\\
\nonumber &=&\pm F^+_{W,\mathfrak{s}}(\tau^*(c^+(\xi))).
\end{eqnarray}

This proves that $c^+(\xi)$ and $c^+(\xi ')=\tau^*(c^+(\xi))$ are distinct. To prove the last statement, note that up to sign the $U$--equivariant involution $\tau^*:HF^+(-\partial W)\to HF^+(-\partial W)$ fixes the image of $HF^\infty(-\partial W)$ under the homomorphism $\pi$ in Equation \ref{longexct}. Since $c^+(\xi)$ and $(c^+(\xi '))$ are not fixed by $\tau$ they descent non-trivially to the $\mathrm{coker} (\pi )$.
\end{proof}

\noindent The following corollary was previously proved in \cite{AM} by using different techniques.

\begin{cor}
The contact structure $\xi$ and $\xi '$ are homotopic, contactomorphic but not isotopic.
\end{cor}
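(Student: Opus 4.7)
The plan is to handle the three assertions one by one; only the non-isotopy claim needs input beyond standard facts. The contactomorphism is immediate from the definition: since $\xi' = \tau^*\xi$, the cork twist $\tau \colon \partial W \to \partial W$ is itself a contactomorphism from $(\partial W, \xi')$ to $(\partial W, \xi)$, so nothing further needs to be checked.

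For non-isotopy I would quote Theorem \ref{distinct} directly. The Ozsv\'ath--Szab\'o contact invariant $c^+$ depends only on the isotopy class of a contact structure, so if $\xi$ were isotopic to $\xi'$ we would have $c^+(\xi) = c^+(\xi')$ in $HF^+(-\partial W)/(\pm 1)$. Theorem \ref{distinct} rules this out, so $\xi$ and $\xi'$ cannot be isotopic.

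For the homotopy statement I would invoke the standard classification of homotopy classes of oriented $2$--plane fields on a closed oriented $3$--manifold: they are determined by the induced $\spinc$ structure together with, when $c_1$ is torsion, the Gompf $d_3$ invariant. Condition \textbf{(3)} forces $\partial W$ to be an integer homology sphere, so its $\spinc$ structure is unique and the two plane fields automatically induce the same one. Since $\tau$ is a self-diffeomorphism of $\partial W$ and $d_3$ is a diffeomorphism invariant of oriented plane fields, $d_3(\xi') = d_3(\tau^*\xi) = d_3(\xi)$. Hence $\xi$ and $\xi'$ represent the same homotopy class of oriented $2$--plane fields. I do not expect any serious obstacle: the substantive work has already been carried out in Theorem \ref{distinct}, and the remaining two assertions reduce to well-known general principles.
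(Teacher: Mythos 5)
Your proposal is correct and follows the paper's proof almost step for step: contactomorphism is immediate from $\xi' = \tau^*\xi$, non-isotopy is exactly Theorem~\ref{distinct} together with isotopy invariance of $c^+$, and homotopy is handled via the $\spinc$/$d_3$ classification of oriented $2$--plane fields, with the $\spinc$ part trivial since $\partial W$ is a homology sphere. The one small divergence is in how the equality of $3$--dimensional invariants is justified: you appeal directly to the fact that $d_3$ is invariant under pullback by an orientation-preserving self-diffeomorphism (reparameterize the boundary of any bounding almost-complex $4$--manifold by $\tau$, leaving $c_1^2$, $\chi$, $\sigma$ unchanged), whereas the paper instead builds a topologically trivial Stein cobordism from $(\partial W,\xi)$ to $(\partial W,\xi')$ by gluing the ends of the symplectization via $\tau$ and observes that the $d_3$-difference formula then vanishes. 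These amount to the same computation; your phrasing is a bit more direct, while the paper's cobordism picture keeps the argument in the same Stein-cobordism language used throughout the section. Either way the content is identical, so there is no gap.
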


\begin{proof}
We use the homotopy classification of $2$--plane fields on a $3$--manifold (see \cite{Go}). Since $\partial W$ is an integral homology sphere, $\xi$ and $\xi '$ have the same two dimensional invariant. These two have also the same $3$--dimensional invariant because they can be connected by a Stein cobordism which is topologically trivial: Simply take the symplectizations of $(\partial X ,\xi)$ and $(\partial X ,\xi)$, and glue two ends using $\tau$. This proves that $\xi$ and $\xi '$ are homotopic. By definition $\tau$ defines a contactomorphism between these two contact structures. Theorem \ref{distinct} shows that they are not isotopic. \end{proof}

\noindent Next corollary was first proved in \cite{A2} for Mazur manifold, and in \cite{AY2} in the general form. 

\begin{cor}
The cork twist $\tau:\partial W\to \partial W$ does not extend inside of $W$ as a diffeomorphism.
\end{cor}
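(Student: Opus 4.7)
The plan is to argue by contradiction, reusing the closed $4$--manifolds from the proof of Theorem \ref{distinct}. Assume that $\tau$ extends to a diffeomorphism $\tilde\tau: W\to W$. Let $V = V_0\cup V_1$ be the concave symplectic filling of $(\partial W,\xi)$ produced in that proof, and form the two closed $4$--manifolds $X := W\cup V$ and $X' := W\cup_\tau V$, glued via the identity and via $\tau$ respectively along $\partial W$.

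The first step is to produce a diffeomorphism $\Psi: X\to X'$ by letting $\Psi$ be $\tilde\tau$ on the $W$--piece and the identity on $V$. Since $\tilde\tau|_{\partial W}=\tau$, the map intertwines the two gluings: a boundary point $p$ coming from $W\subset X$ and from $V\subset X$ (identified trivially) maps to $\tilde\tau(p)=\tau(p)$ from $W\subset X'$ and to $p$ from $V\subset X'$, which are identified via the $\tau$--gluing in $X'$. Hence $\Psi$ is a well-defined diffeomorphism, so $X\cong X'$ smoothly.

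The second step is to recall the asymmetry established inside the proof of Theorem \ref{distinct}: the canonical $\spinc$ structure $\mathfrak{s}$ on the Lefschetz fibration $X$ satisfies $F^{\mathrm{mix}}_{X,\mathfrak{s}}(\Theta^-_{(-2)})=\pm\Theta^+_{(0)}\neq 0$ by Theorem \ref{OSadjunc}, so $\mathfrak{s}$ is a basic class of $X$. On the other hand, $X'$ contains an embedded torus of self-intersection $+1$ coming from the trefoil whose effective framing was shifted by the cork twist, and by the adjunction inequality (Theorem \ref{adjuntion}) this forces every Ozsv\'ath--Szab\'o basic class of $X'$ to vanish. Since basic classes are diffeomorphism invariants, $X$ and $X'$ cannot be diffeomorphic, contradicting Step~1. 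Therefore no such $\tilde\tau$ exists.

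The main conceptual input has already been supplied by Theorem \ref{distinct}; the only thing left to verify is that the boundary condition $\tilde\tau|_{\partial W}=\tau$ forces the gluing map to descend, which is immediate. I anticipate no genuine obstacle — the corollary is essentially a repackaging of the computation in the previous theorem, with the diffeomorphism $\tilde\tau$ serving merely as the vehicle to transfer the basic-class information between $X$ and $X'$.
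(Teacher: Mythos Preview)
Your argument is correct, but it is packaged differently from the paper's own proof. The paper argues directly with the \emph{relative} invariant: from the proof of Theorem \ref{distinct} one has $F^{+}_{W,\mathfrak{s}}(c^{+}(\xi))=\pm\Theta^{+}_{(0)}$ while $F^{+}_{W,\mathfrak{s}}(\tau^{*}c^{+}(\xi))=0$, so $\Phi_{W,\mathfrak{s}}(\xi)\neq\Phi_{W,\mathfrak{s}}(\tau^{*}\xi)$. If $\tau$ extended to a self-diffeomorphism of $W$, naturality of the cobordism map (together with uniqueness of the $\spinc$ structure on the contractible $W$) would force these two numbers to agree. Your version instead globalizes: you build the explicit diffeomorphism $\Psi:X\to X'$ from a hypothetical extension $\tilde\tau$ and then contrast the existence of the canonical basic class on $X$ with the adjunction-violating torus in $X'$. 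Both routes feed off exactly the same computation in Theorem \ref{distinct}; the paper's phrasing isolates the obstruction as a relative invariant of $(W,\xi)$, while yours avoids introducing $\Phi_{W,\mathfrak{s}}(\xi)$ at the cost of passing through the closed manifolds and invoking diffeomorphism invariance of the set of basic classes.
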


\begin{proof} Let $\mathfrak{s}$ be the unique $\spinc$ structure on $W$. Let $\xi$ be the contact structure induced by a Stein structure.  The proof of Theorem \ref{distinct} shows that $F_{W,\mathfrak{s}}(c^+(\xi))=\Theta^+_{(0)}$ and $F_{W,\mathfrak{s}}(\tau^* c^+(\xi))=0$. This shows that the relative invariants $\Phi_{W,\mathfrak{s}}(\xi)$ and $\Phi_{W,\mathfrak{s}}(\tau^* \xi)$ are different.
\end{proof}

The following corollary was first proved in \cite{AM1} by using adjunction inequality. Let us first introduce a terminology. Two $\spinc$ manifolds $(X,\mathfrak{s})$ and $(X',\mathfrak{s}')$ are said to be fake copies of each other if they are homeomorphic but not diffeomorphic.

\begin{cor}\label{cor:fake}
The cork $W$ can be symplectically embedded in some closed symplectic $4$--manifold $X$ so that removing $W$ and regluing it via cork twist produces a fake copy $X$, with its canonical $\spinc$ structure.

\end{cor}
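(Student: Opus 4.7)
The plan is to observe that the closed symplectic $4$-manifold $X := W\cup V$ constructed inside the proof of Theorem \ref{distinct} is already the manifold the corollary demands. By construction, $V$ is a concave symplectic filling of $(\partial W,\xi)$ containing the trefoil-handle Stein cobordism $M$, and $W$ itself is Stein, so $X$ is a closed symplectic $4$-manifold into which $W$ is symplectically embedded; the relatively minimal Lefschetz fibration on $X$ produced by Theorem \ref{concavefill} equips $X$ with its canonical $\spinc$ structure $\mathfrak{s}$.

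Next, let $X' := W\cup_\tau V$ be obtained by removing $W$ and regluing via the cork twist $\tau$. Because $W$ is contractible, Freedman's theorem extends $\tau$ to a self-homeomorphism of $W$, which combines with the identity on $V$ to give a homeomorphism $\phi:X\to X'$; I would transport $\mathfrak{s}$ to $\mathfrak{s}':=\phi_\ast\mathfrak{s}$ on $X'$, so that $(X,\mathfrak{s})$ and $(X',\mathfrak{s}')$ carry corresponding canonical $\spinc$ structures in the sense required by the statement.

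Finally, the smooth distinction of $(X,\mathfrak{s})$ and $(X',\mathfrak{s}')$ is essentially recorded inside the proof of Theorem \ref{distinct}: the composition law together with Lemma \ref{concavehitit} and Theorem \ref{OSadjunc} gives
\[
F^{\mathrm{mix}}_{X,\mathfrak{s}}(\Theta^-_{(-2)})=\pm F^+_{W,\mathfrak{s}}(c^+(\xi))=\pm\Theta^+_{(0)},
\]
so $\Phi_{X,\mathfrak{s}}\neq 0$, while on the twisted side the embedded genus-one, self-intersection $+1$ torus visible on the right of Figure \ref{fig:twist} lies in $X'$ and violates the adjunction inequality (Theorem \ref{adjuntion}), forcing $\Phi_{X',\mathfrak{s}'}=0$. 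Consequently $X$ and $X'$ are homeomorphic but not diffeomorphic as $\spinc$ $4$-manifolds, and $X'$ is a fake copy of $X$. The only point that requires genuine (but routine) verification beyond appealing to the theorem is the naturality statement needed for the last paragraph, namely that the $\spinc$ structure with which these invariants are computed really is the canonical class of the Lefschetz fibration on $X$, transported to $X'$ via $\phi$; this is immediate from the construction of $V$ and the fact that $\Phi$ is a topological invariant of the pair $(X,\mathfrak{s})$.
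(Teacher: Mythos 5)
Your argument is correct and its skeleton is the same as the paper's: take $X=W\cup V$ for a concave filling $V$ of $(\partial W,\xi)$, reglue to form $X'=W\cup_\tau V$, produce a homeomorphism, and distinguish the two as $\spinc$ $4$-manifolds by showing $\Phi_{X,\mathfrak{s}}\ne 0$ while $\Phi_{X',\mathfrak{s}'}=0$. The one genuine difference is which $V$ you take. You recycle the specific filling built in the proof of Theorem \ref{distinct} (the one obtained from Lemma \ref{concavehitit} so as to contain the trefoil-handle cobordism $M$), which lets you kill $\Phi_{X',\mathfrak{s}'}$ by applying the adjunction inequality directly to the torus of Figure \ref{fig:twist} inside $X'$. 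The paper instead chooses an arbitrary concave filling from Theorem \ref{concavefill}, uses the plain Lemma \ref{concavehit}, and quotes the identity $F^+_{W,\mathfrak{s}}(\tau^*c^+(\xi))=0$ already established in Theorem \ref{distinct} to get $F^{\mathrm{mix}}_{X',\mathfrak{s}'}(\Theta^-_{(-2)})=0$. Your route is slightly more self-contained (no re-use of a displayed equation from another proof), while the paper's is slightly more general, showing that every filling produced by Theorem \ref{concavefill} works — a point it then emphasizes in the ensuing remark and conjecture. Both are valid; your handling of the homeomorphism via Freedman's extension theorem on the contractible piece $W$, glued to the identity on $V$, is if anything a bit more explicit than the paper's appeal to Freedman on the closed manifolds.
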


\begin{proof}
Pick a concave symplectic filling $V$ of $(\partial W, \xi)$ as in Theorem \ref{concavefill}. The manifold $X=W\cup V$ is simply connected and symplectic. Let $\mathfrak{s}$ be the canonical $\spinc$ structure of $X$. Now, remove the cork and reglue it using cork twist. The manifold $X'=W\cup_\tau V$ is simply connected and has the same intersection form as $X$.  By Freedman's theorem there is a homeomorphism $f:X\to X'$. Let $\mathfrak{s}'=f_*(\mathfrak{s})$. We will prove that $\mathfrak{s}'$ is not a basic class. By Lemma \ref{concavehit}

\begin{eqnarray}\nonumber
F^\mathrm{mix}_{X',\mathfrak{s}'}(\Theta^-_{(-2)})&=&F^+_{W,\mathfrak{s}}\circ \tau^* \circ F^{\mathrm{mix}}_{V,\mathfrak{s}}(\Theta^-_{(-2)})\\
\nonumber &=& \pm F^+_{W,\mathfrak{s}}(\tau^*(c^+(\xi)))\\
\nonumber &=& 0.
\end{eqnarray}
\end{proof}

\begin{rem}
Note that if the inflated cork $W\cup_{id}M$ (of Theorem 1.1) embeds symplectically into a symplectic manifold $X$, then the cork twist produces a fake copy of $X$ since in the cork twisted manifold the adjunction inequality fails. This is what is used in \cite{AM1} and \cite{AY2}.
\end{rem}

\begin{rem}
Let us formulate Corollary \ref{cor:fake} in other terms: There is a concave filling $V$ of $(\partial W, \xi)$ such that attaching $V$ to $W$  in two different ways produce two closed $\spinc$ $4$--manifolds $(X,\mathfrak{s})$ and $(X',\mathfrak{s}')$ that are fake copies of each other.  A large family of concave fillings satisfies this condition and presumably this also holds for all concave fillings (with $b_2^+(V)\geq 2$). Proving latter, however, requires a generalization of Lemma \ref{concavehit} for arbitrary concave fillings. Once this is done, one would be able to show the following:
\end{rem}

\noindent \textbf{Conjecture:} If a cork embeds symplectically into any symplectic manifold $X$, such that  $b_2^+(X)\geq 2$, then the cork twist produces a fake copy of $X$. 

\newpage


\begin{thebibliography}{100}

\bibitem{A} S. Akbulut, \emph{ On 2-dimensional homology classes of 4-manifolds}, Math. Proc. Camb. Phil. Soc. 82 (1977), 99-106.
\bibitem{A1} S. Akbulut, \emph{An Involution permuting Floer Homology}, Turkish J. Math. 18 (1994) 16–22.
\bibitem{A2} S. Akbulut, \emph{A fake contractible 4--manifold}, Journ. of Diff. Geo. 33, (1991), 335--356
\bibitem{AM} S. Akbulut and R. Matveyev, \emph{A note on contact structures.}  Pac. Jour of Math, vol.182, no.2,(1998) 201-204.
\bibitem{AM1} S. Akbulut and R. Matveyev, \emph {Exotic structures and adjunction inequality}, 
Turkish Journal of Math 21 (1997) 47-53
\bibitem{AD} S. Akbulut and S. Durusoy, \emph{An involution acting non-trivially on Heegaard-Floer homology}, Fields Institute Communications, vol 47, (2005),1-9.
\bibitem{AO1}S. Akbulut and B. Ozbagci, \emph{Lefschetz Fibrations on Compact Stein Surfaces },
Geometry and Topology, vol.5 (2001) 319-334 
\bibitem{AO} S. Akbulut and B. Ozbagci,\emph{ On the Topology of Compact Stein Surfaces.}, IMRN, vol 15 (2002), 769-782 
\bibitem{AY} S. Akbulut and K. Yasui, \emph{Corks, Plugs and exotic structures.} Jour. of GGT, vol 2 (2008) 40-82.
\bibitem{AY1} S. Akbulut and K. Yasui, \emph{Small exotic Stein manifolds.} Comm. Math. Helvetici, vol 85, issue 3 (2010) 705-721.
\bibitem{AY2} S. Akbulut and K. Yasui, \emph{Stein 4--manifolds and corks}, arXiv:1010.4122
\bibitem{E} Y. Eliashberg, \emph{Topological characterization of Stein manifolds in dimension
$> 2$}, Int. Journ. of Math. 1 (1990) 29-46.
\bibitem{El2} Y. Eliashberg, \emph{A few remarks about symplectic filling}  Geom. Topol.  8  (2004), 277--293 
\bibitem{G} E. Giroux, \emph{Géométrie de contact: de la dimension trois vers les dimensions supérieures.}  Proceedings of the International Congress of Mathematicians, Vol. II (Beijing, 2002),  405--414, Higher Ed. Press, Beijing, 2002. 
\bibitem{Go} R. Gompf, \emph{Handlebody contruction of Stein surfaces}, Ann. of Math. \textbf{148} (1998), 619-693.
\bibitem{M} R.Matveyev,\emph{A decomposition of smooth simply-connected h-cobordant 4-manifolds,} J. Differential Geom. 44 (1996), no. 3, 571Ð582.
\bibitem{KM} P. B. Kronheimer and T. S. Mrowka, \emph{Monopoles and contact structures.} Invent. Math. 130 (1997), no. 2, 209–255.
\bibitem{LM} P. Lisca and G. Mati\'c, \emph{Tight contact structures and Seiberg-Witten invariants.} Invent. Math. 129 (1997), no. 3, 509–525.
\bibitem{OzS} B. Ozbagci and A. Stipsicz, \emph{Surgery on contact 3-manifolds and Stein surfaces.} Bolyai Society Mathematical Studies, 13. Springer-Verlag, Berlin; J\'anos Bolyai Mathematical Society, Budapest, 2004.
\bibitem{OS2} P. Ozsv\'ath and Z. Szab\'o, \emph{Heegaard Floer homology and contact structures. }
Duke Math. J. 129 (2005), no. 1, 39--61.
\bibitem{OS4} P.Ozsv\'ath and Z Szab\'o,\emph{Holomorphic disks and topological invariants for closed three-manifolds.}  Ann. of Math. (2)  159  (2004),  no. 3, 1027--1158. 
\bibitem{OS5} P.Ozsv\'ath and Z Szab\'o,\emph{Holomorphic disks and three-manifold invariants: properties and applications.}  Ann. of Math. (2)  159  (2004),  no. 3, 1159--1245. 
\bibitem{OS6} P.Ozsv\'ath and Z. Szab\'o,\emph{Holomorphic triangles and invariants for smooth four-manifolds.} Adv. Math. 202 (2006), no. 2, 326–400.
\bibitem{OS7}P.Ozsv\'ath and Z. Szab\'o, \emph{Holomorphic triangle invariants and the topology of symplectic four-manifolds.}  Duke Math. J.  121  (2004),  no. 1, 1--34
\bibitem{P}O. Plamenevskaya \emph{Contact structures with distinct Heegaard Floer invariants }, Math. Res. Let. 11,(2004) 547-561.
\bibitem{S}N. Saveliev, \emph{A Note on Akbulut Corks}, Math. Res. Lett. 10 (2003) 777–785.
\end{thebibliography}
\end{document}